
\documentclass{amsart}
\usepackage[fontsize=10pt]{scrextend}
\usepackage{amsmath}
\usepackage{amssymb}
\numberwithin{equation}{section}
\usepackage[english]{babel}
\usepackage{amsthm}
\usepackage{bbm}
\usepackage{amsfonts}
\usepackage{comment}
\usepackage{mathrsfs}
\usepackage{stix}
\pagestyle{plain}
\usepackage[margin=3cm]{geometry}
\usepackage[textsize=scriptsize]{todonotes}
\setlength{\marginparwidth}{2.5cm}

\setcounter{secnumdepth}{4}
\usepackage{graphicx, tikz}
\usepackage{float}
\usepackage{xcolor}
\usepackage{bbm}
\usepackage[format=plain,
            font=it]{caption}
\usepackage{hyperref}
\usepackage{graphicx, caption, subcaption}
\hypersetup{
    colorlinks=true,
    linkcolor=blue,
    citecolor=red,
    filecolor=magenta,      
    urlcolor=black,
    pdftitle={Topologically protected modes in systems with damping},
    pdfpagemode=FullScreen,
    }




\newtheorem{theorem}{Theorem}[section]
\newtheorem{lemma}[theorem]{Lemma}
\newtheorem{corollary}[theorem]{Corollary}

\theoremstyle{definition}
\newtheorem{definition}[theorem]{Definition}






\theoremstyle{remark}
\newtheorem{remark}[theorem]{Remark}

\numberwithin{equation}{section}



\newcommand{\bdis}{\begin{displaymath}}   \newcommand{\edis}{\end{displaymath}}
\newcommand{\beq}{\begin{equation}}       \newcommand{\eeq}{\end{equation}}
\newcommand{\beql}[1]{\begin{equation}\label{#1}} 
\newcommand{\beqla}{\begin{equation}\label}
\newcommand{\beqnt}{\begin{equation*}}    \newcommand{\eeqnt}{\end{equation*}}
\newcommand{\bsplit}{\begin{split}}       \newcommand{\esplit}{\end{split}}
\newcommand{\bspl}{\begin{split}}          \newcommand{\espl}{\end{split}}
\newcommand{\bproof}{\begin{proof}}       \newcommand{\eproof}{\end{proof}}
\newcommand{\bcenter}{\begin{center}}     \newcommand{\ecenter}{\end{center}}
\newcommand{\bce}{\begin{center}}         \newcommand{\ece}{\end{center}}



\newcommand{\ve}{\vert }









    \font\bb=msbm10


    












\font\elevenrm=cmr11     \font\twelverm=cmr12
\def\1{\hbox{\elevenrm 1}\!\!\hbox{\twelverm 1}}





\newcommand{\upd}{\mathrm{d}}

\def\C{\hbox{\bb C}}    
\def\R{\hbox{\bb R}}   


     \def\d{\delta}          
    \def\ve{\varepsilon}    
               
\def\k{\kappa }          \def\l{\lambda }    
    \def\m{\mu}                      
    \def\r{\rho}                 
                   
   \def\f{\phi }                
                         
       \def\w{\omega }





\begin{document}

\title{Topological interface modes in systems with damping}

\author[Konstantinos Alexopoulos]{Konstantinos Alexopoulos}
\address{\parbox{\linewidth}{Konstantinos Alexopoulos\\Centre de Mathématiques Appliquées, École Polytechnique, 91120 Palaiseau, France}}
\email{konstantinos.alexopoulos@polytechnique.edu}

\author[Bryn Davies]{Bryn Davies}
\address{\parbox{\linewidth}{Bryn Davies\\Mathematics Institute, University of Warwick, Coventry CV4 7AL, UK}}
\email{bryn.davies@warwick.ac.uk}

\author[Erik Orvehed Hiltunen]{Erik Orvehed Hiltunen}
\address{\parbox{\linewidth}{Erik Orvehed Hiltunen\\Department of Mathematics,  University of Oslo, Moltke Moes vei 35, 0851 Oslo, Norway}}
\email{erikhilt@math.uio.no}

\begin{abstract}
    We extend the theory of topological localised interface  modes to systems with damping. The spectral problem is formulated as a root-finding problem for the interface impedance function and Rouché's theorem is used to track the zeros when damping is introduced. We show that the localised eigenfrequencies, corresponding to interface modes, remain for non-zero dampings. Using the transfer matrix method, we explicitly characterise the decay rate of the interface mode. 
\end{abstract} 

\maketitle



\section{Introduction}

A cornerstone of modern wave physics is the creation of localised eigenmodes, which facilitate the strong focusing of wave energy and are often the starting point for building wave guides and other wave-focusing devices. One way to realise such modes is to add defects to periodic media. When done correctly, this creates eigenmodes that belong to the pure-point spectrum of the operator and are localised such that they decay exponentially as a function of distance from the defect. Further, introducing defects with account of the system's underlying topological properties (taking inspiration from the field of topological insulators \cite{hasan2010colloquium, kane2005z}) has yielded a systematic way to create strongly localised eigenmodes that are, additionally, robust to imperfections. The specific, tunable and controllable nature of these topological localised modes (see \emph{e.g.} \cite{boardman2011active, chaplain2023tunable, chen2016tunable, makwana2018geometrically}) means that waves of selected frequencies can be focused at desired locations. As a result, they are ideally suited to serve as the basis for developing a wide variety of wave guiding and control devices and have been exploited extensively by experimentalists: see \emph{e.g.} \cite{PriceRoadmap} for a review of topological photonics.

The mathematical theory for localised modes has been thoroughly developed for undamped systems recently. For one-dimensional systems, there are extensive theories for both the Schr\"odinger equation \cite{fefferman2014topologically, fefferman2017topologically} and classical wave systems \cite{alexopoulos2023topologically, ammari2020topologically, ammari2022robust, lin2022mathematical, coutant2024surface, craster2023asymptotic, thiang2023bulk}. There have also been important breakthroughs in extending these ideas to multi-dimensional partial differential models \cite{ammari2020topologically, ammari2022robust, bal2023edge, drouot2019bulk, li2024topological, PhysRevA.93.033822, Fefferman_2016}. These approaches typically rely on either the use of integral operators \cite{ammari2022robust, ammari2020topologically} or reducing the problem to (coupled) Dirac operators \cite{fefferman2014wave, drouot2019bulk, ammari2020high}. Nevertheless, the crucial mechanisms can already be studied in one-dimensional systems as it is informative to consider the problem of introducing an interface that breaks a single axis of periodicity.

Real-world physical systems invariably experience some degree of energy dissipation or damping. Damping can be represented in a system as a non-zero imaginary part of the permittivity function. For example, this is common in models for the permittivity of several important materials, such as the behaviour of metals at optical frequencies (described \emph{e.g.} by the Drude model \cite{ordal1983optical}) and popular material choices for the fabrication of photonic crystals (\emph{e.g.} halide perovskites \cite{AM, AD}). However, the existing mathematical theory mostly focuses on systems with real-valued material parameters. 

This work extends the mathematical theory of localised modes to systems with damping in terms of complex-valued permittivity functions. The main theoretical challenge of these settings is that the addition of damping perturbs the real-valued spectrum of the differential operator into the complex plane. We have as our starting point the work carried out in \cite{alexopoulos2023effect, coutant2024surface, lin2022mathematical, thiang2023bulk, tsukerman2023topological, xiao2014surface}, which use surface impedance functions to relate the existence of interface modes to properties of the Floquet-Bloch spectrum of the periodic (bulk) materials. These localised interface modes are also sometimes known as \emph{edge modes} and the theory relating them to bulk properties of the material is known as the \emph{bulk-edge correspondence}. Our approach is to use Rouché's theorem to track how the real-valued eigenfrequencies of interface modes in undamped materials are perturbed in the complex plane as damping is introduced.

We will begin by summarising the main results of this work in Section~\ref{sec:Summary}, along with introducing the key definitions and tools that will be required. The problem will be introduced in more detail in Section~\ref{sec:Mathematical setting}, followed by some basic properties of the impedance functions in Section~\ref{sec:Symmetry results}. In Section~\ref{sec:Undamped}, we will summarise the existing results for undamped systems, in which case the existence and uniqueness of localised interface modes is well understood. Finally, in Section~\ref{sec:Damped}, we move to the case of damped systems with complex eigenfrequencies. We consider this as a perturbative regime of the undamped case and, using Rouch\'e's theorem, we show the existence and (in a suitable sense) uniqueness of interface modes.

In Section \ref{sec:Asymptotic behaviour}, using transfer matrices, we compute an estimate for the exponential decay rate of the localised eigenmodes. In Section \ref{sec:Large damping}, we numerically investigate the conditions of Theorem \ref{thm:CC} and discuss how the result can be used for larger dampings. Finally, in Section \ref{sec:Numerical example}, we consider a numerical example of a damped system. We show how the behaviour of the band gaps changes in the presence of damping and how this affects the location of the frequency of the interface localised mode in a band gap. In addition, we show the localisation of the mode at the interface and its decaying character at infinity.

\section{Summary of results}\label{sec:Summary}
We begin by outlining our method and summarising the main contributions of our work, which is a generalises the theory set out in \cite{coutant2024surface, thiang2023bulk, alexopoulos2023topologically} to damped systems. We will recall details from this previous work, as needed.

We will consider the spectrum of a one-dimensional differential operator posed on a domain that has an interface formed by gluing together two different periodic materials. We will label the two materials by $A$ and $B$ and fix the coordinate system so that the interface falls at the origin, denoted by $x_0$. Let $\ve:\mathbb{R}\to\mathbb{C}$ denote the system's permittivity function whose non-zero imaginary part represents the damping in the system. We define $\ve$ by
\begin{align*}
    \ve(x) = 
    \begin{cases}
        \ve_A(x), \quad x < x_0,\\
        \ve_B(x), \quad x \geq x_0,\\
    \end{cases}
\end{align*}
where $\ve_j$, for $j=A,B$, denotes the permittivity function of material $j$. Suppose that $\ve(x)$ is piecewise smooth and that each $\ve_j$ is periodic with period $1$. We consider the differential eigenvalue problem $\mathcal{L}u=\omega^2u$ for the operator
\begin{equation}\label{def:L:summary}
    \mathcal{L}u := - \frac{1}{\mu_0}\frac{\partial}{\partial x} \left( \frac{1}{\ve(x)} \frac{\partial u}{\partial x} \right),
\end{equation}
where $\m_0\in\mathbb{R}$ denotes the constant magnetic permeability and $\w\in\mathbb{C}$ is the frequency.

We are interested in showing existence of eigensolutions of \eqref{def:L:summary} which are localised around the interface $x_0$, meaning they are non-zero and decay exponentially as $|x|\to\infty$. The starting point for this is to study the Floquet-Bloch spectra of the two periodic materials $A$ and $B$. In each case, we search for eigensolutions that satisfy the Floquet-Bloch quasiperiodic conditions
\begin{align}\label{eq:FB:summary}
    u(x+1) = e^{i\k}u(x)\quad\text{and}\quad \frac{\partial u}{\partial x}(x+1) = e^{i\k}\frac{\partial u}{\partial x}(x),
\end{align}
for some $\k$ in the first Brillouin zone $\mathcal{B}:=[-\pi,\pi]$. This reveals a countable sequence of spectral ``bands'' of solutions. In the case of damping, these bands will be complex-valued. Since we require $\k$ to be real, these solutions must have constant spatial amplitude. As a result, the localised solutions we are looking for will have eigenvalues located in the gaps between these bands. In this setting, a band gap is a connected component of the complement of the Floquet-Bloch spectrum in $\mathbb{C}$. The two materials $A$ and $B$ are allowed to have different spectra; however, for an eigenmode to decay in both directions away from the interface, the two materials must have a non-empty common band gap. The localised eigenvalues must fall in this common band gap.

Once a common band gap of the two materials has been identified, the properties of the two materials need to be tuned so that an interface mode is allowed to exist. This is captured by the \emph{surface impedance} function of each material:
\begin{align}\label{def:Z_A and Z_B:summary}
    Z_A(\w) := - \frac{u_{A}(x_0^-,\w)}{\frac{1}{\ve(x_0^-)}\frac{\partial u_{{A}}}{\partial x}(x_0^-,\w)} \quad \text{ and } \quad Z_B(\w) :=  \frac{u_{{B}}(x_0^+,\w)}{\frac{1}{\ve(x_0^+)}\frac{\partial u_{{B}}}{\partial x}(x_0^+,\w)}, \quad \w\in\mathbb{C},
\end{align}
where $u_{j}$ is a decaying solution to each half-space problem $j=A$ and $j=B$. We define the \emph{interface impedance} to be the function $Z:\C\to\C$ given by
\begin{equation} \label{eq:Z_summary}
    Z(\w) = Z_A(\w) + Z_B(\w).
\end{equation}
The interface impedance $Z$ reveals when the half-space solutions can be glued together to create a localised mode: an interface mode exists with eigenfrequency $\omega\in\C$ if and only if 
\begin{align}\label{eq:impedancecondition:summary}
    Z(\w) = 0.
\end{align}
This is a generalisation of the standard approach for undamped materials \cite{alexopoulos2023topologically,coutant2024surface,thiang2023bulk}. In the undamped case, we will write the interface impedance $Z^{(U)}$, while in the damped case we will write $Z^{(D)}$.

The key conclusion of the above is that the existence of localised interface modes is equivalent to a root finding problem for the interface impedance function $Z(\w)$ in  \eqref{eq:Z_summary}. This is a complex-valued function of a complex variable. Our approach to proving the existence of these roots is to study the problem as a perturbation of the system with no damping. From \cite{alexopoulos2023topologically,coutant2024surface,thiang2023bulk}, we have the existence and uniqueness of interface-localised modes with real-valued eigenfrequencies in the case of no damping. In this case, the interface impedance function $Z(\omega)$ is real-valued whenever $\omega$ is real, so it reduces to a root finding problem for a real-valued function of a real variable. When damping is introduced, these roots are perturbed off the real axis into the complex plane. The key result of this work is using Rouch\'e's theorem to keep track of these roots as the damping is introduced. 

We consider a regime of small damping, \emph{i.e.}, we introduce a parameter $\d>0$ such that
\begin{align*}
    \lim_{\d\to0} \sup_{x\in\R} \mathrm{Im}(\ve(x)) = 0.
\end{align*}
The following theorem is the main result of this work. It uses the notation $\mathfrak{A}(\delta)$ for the subset of $\C$ that is the complement of the Floquet-Bloch spectrum of the system with damping parameter $\delta$ (\emph{i.e.} $\mathfrak{A}(\delta)$ is the ``complex band gap'' of the damped material). Hence, $\mathfrak{A}(0)$ is the union of all the band gaps of the undamped system.
\begin{theorem}
    Let $\mathfrak{A}(\delta)$ denote a common band gap of \eqref{def:L:summary}-\eqref{eq:FB:summary}. For each root $\w_U$ of $Z^{(U)}$ in $\mathfrak{A}(0)$, there exists a root $\w_D$ of $Z^{(D)}$ in $\mathfrak{A}(\delta)$, for $0<\d\ll 1$, converging to  $\w_U$ as $\d\to 0$.
\end{theorem}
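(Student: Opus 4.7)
The plan is to apply Rouché's theorem on a small disk centered at the undamped root $\w_U$, treating $Z^{(D)}$ as a small holomorphic perturbation of $Z^{(U)}$. The starting point is the undamped theory summarised in Section~\ref{sec:Undamped}, which gives $\w_U\in\R\cap\mathfrak{A}(0)$ as an isolated zero of $Z^{(U)}$. Provided that we can interpret $Z^{(U)}$ and $Z^{(D)}$ as holomorphic functions of $\w$ in a common neighbourhood of $\w_U$, and that $Z^{(D)}$ converges uniformly to $Z^{(U)}$ on compact subsets of $\mathfrak{A}(0)$ as $\d\to 0$, Rouché's theorem will immediately furnish the required root $\w_D$.

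First, I would verify the analytic dependence on $\w$. The half-space decaying solutions $u_{A}$ and $u_{B}$ are (up to normalisation) the Floquet eigensolutions associated to the stable multiplier of the transfer matrix over a single period. Since the transfer matrix depends holomorphically on $\w$, and since in the band gap $\mathfrak{A}(\d)$ the two Floquet multipliers are separated (one inside, one outside the unit circle), a standard spectral projection argument gives holomorphic dependence of the stable eigenvector on $\w$ in a neighbourhood of any point in $\mathfrak{A}(\d)$. This makes $Z_A$, $Z_B$ and hence $Z=Z_A+Z_B$ holomorphic on $\mathfrak{A}(\d)$ for each fixed $\d$.

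Second, I would establish joint continuity in $(\w,\d)$ and the stability of the band gap. The transfer matrices over $[0,1]$ depend continuously (in fact analytically) on the coefficient $1/\ve$, so as $\d\to 0$ the Floquet multipliers of the damped system converge uniformly on compact $\w$-sets to those of the undamped system. This has two consequences I would exploit: (i) the complex band gap is lower semicontinuous in $\d$, so for any closed disk $\ol{D_r(\w_U)}\sbs\mathfrak{A}(0)$ we have $\ol{D_r(\w_U)}\sbs\mathfrak{A}(\d)$ for all sufficiently small $\d$; (ii) the stable eigenprojection, and thus $Z^{(D)}$, converges uniformly to $Z^{(U)}$ on $\ol{D_r(\w_U)}$.

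With these preparations the Rouché argument closes as follows. Choose $r>0$ small enough that $\ol{D_r(\w_U)}\sbs\mathfrak{A}(0)$ contains no other zeros of $Z^{(U)}$, and set $m:=\min_{\w\in\pd D_r(\w_U)}|Z^{(U)}(\w)|>0$. Pick $\d$ so small that $\ol{D_r(\w_U)}\sbs\mathfrak{A}(\d)$ and $\sup_{\w\in\pd D_r(\w_U)}|Z^{(D)}(\w)-Z^{(U)}(\w)|<m$. Then $Z^{(D)}$ and $Z^{(U)}$ have the same number of zeros inside $D_r(\w_U)$, counted with multiplicity, yielding $\w_D\in D_r(\w_U)$ with $Z^{(D)}(\w_D)=0$. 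Letting $r\to 0$ (and shrinking $\d$ accordingly) gives $\w_D\to\w_U$. The main obstacle is the uniform control in step two: one must show that the spectral separation of the Floquet multipliers, which defines the band gap and underlies the impedance construction, survives the perturbation $\IM(\ve)\to 0^+$, and that this separation is uniform in $\w$ on the boundary circle. Once this quantitative continuity of the transfer matrix's stable subspace is in hand, the rest of the argument is routine.
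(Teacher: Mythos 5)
Your proposal is correct and follows essentially the same route as the paper: holomorphy of the impedance functions in the interior of the common band gap, convergence of $Z^{(D)}$ to $Z^{(U)}$ as $\delta\to 0$, and Rouch\'e's theorem on a small contour around $\omega_U$. If anything, you are more explicit than the paper on two points it leaves largely implicit --- the uniformity of the convergence on the boundary circle and the stability of the band gap under small damping --- but the underlying argument is identical.
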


Observe that the existence of the undamped localized eigenfrequency $\w_U$ was established in \cite{coutant2024surface,thiang2023bulk}. These eigenfrequencies were also shown to be unique inside each (real) band gap. This gives us the existence of interface modes for frequencies which lie in the spectral band gaps of \eqref{def:L:summary}-\eqref{eq:FB:summary} for small but non-zero $\delta$.

Using the transfer matrix method, we also study the spectral properties of the matrices describing the behaviour of the solution as we move further from the interface $x_0$. We obtain an equivalent condition to \eqref{eq:impedancecondition:summary} in terms of the eigenvectors of the transfer matrices and we get the asymptotic behaviour of $u$ as $|x|\to\infty$.

\section{Mathematical setting} \label{sec:Mathematical setting}
We now define the problem outlined above, and introduce the setting to be considered in the remainder of this work.

\subsection{Damped systems}\label{subsection:MS:Damped systems} \hfill

We consider two materials $A$ and $B$. Each one is of the form of a semi-infinite array. The arrays are glued together at the origin and we assume that the material $A$ extends to $-\infty$ and the material $B$ extends to  $+\infty$. In addition, we assume that each material is constructed by repeating periodically a unit cell. We denote the permittivity function of material $A$ by $\ve_A$ and of material $B$ by $\ve_B$. Our assumptions on the permittivity functions are the following: 
\begin{itemize}
    \item piecewise smooth and complex-valued, \emph{i.e.} $\ve_j:\mathbb{R}\to\mathbb{C}$, 
    \item positive real part: $\mathrm{Re}(\ve_j) > 0$,
    \item periodic, \emph{i.e.} $\ve_j(x)=\ve_j(x+1)$, and
    \item inversion symmetric, \emph{i.e.} $\ve_j(x+h)=\ve_j(x+1-h)$, for $h\in(0,1)$,
\end{itemize}
where $j=A,B$.

We define the sequence $\{x_n\}_{n\in\mathbb{Z}}$ to be the set of endpoints of each one of the periodically repeated cells, \emph{i.e.} $x_n=x_0+n$ for $n\in\mathbb{Z}$. We take $x_0$ to be the interface point of the two materials, and so, for $n<0$, $x_n$ is in material $A$ and for $n>0$, $x_n$ is in material $B$.

\subsection{Differential problem}\hfill

The differential problem we are studying in our search for localised eigenmodes is the eigenvalue problem
\begin{align}\label{eq:Helmholtz}
    \mathcal{L}u=\omega^2u
\end{align}
with
\begin{equation}\label{def:L}
    \mathcal{L}u := - \frac{1}{\mu_0}\frac{\partial }{\partial x} \left( \frac{1}{\ve(x)} \frac{\partial u}{\partial x} \right),
\end{equation}
where $\m_0\in\mathbb{R}_{>0}$ is the magnetic permeability, which is constant,  $\ve$ is the permittivity of the system given by
\begin{align}\label{def: system perm}
    \ve(x) := 
    \begin{cases}
        \ve_A(x), \quad x < x_0,\\
        \ve_B(x), \quad x \geq x_0,
    \end{cases}
\end{align}
and $\w\in\mathbb{C}$ is the frequency.  Our goal is to find eigenvalues of \eqref{def:L} for which the associated eigenmodes are localised in a neighborhood of the interface $x_0$ and decay (exponentially) as $|x|\rightarrow\infty$.

To find candidate eigenvalues $\omega^2$ for which localised eigenmodes can occur, it is valuable to consider the Floquet-Bloch spectrum of the operators associated to the periodic materials $A$ and $B$. This is the set of Bloch modes which satisfy \begin{equation}\label{eq:Lj}
\mathcal{L}_ju = \omega^2u,
\end{equation}
where the operator $\mathcal{L}_j$ is defined by $\mathcal{L}_j = - \frac{1}{\mu_0}\frac{\partial }{\partial x} \left( \frac{1}{\ve_j(x)} \frac{\partial }{\partial x} \right)$, for $j=A,B$, along with the quasi-periodicity conditions
\begin{align}\label{eq:FB}
    u(x+1) = e^{i\k}u(x)\quad\text{and}\quad \frac{\partial u}{\partial x}(x+1) = e^{i\k}\frac{\partial u}{\partial x}(x),
\end{align}
for some $\k\in [-\pi,\pi]$. Hence, these eigenmodes belong to the space of functions
\begin{align*}
    H^2_{\k} := \Big\{ f\in H^2_{\mathrm{loc}}: \ f(x+1) = e^{i\k} f(x), \ x\in\mathbb{R} \Big\}.
\end{align*}

With this formulation of the problem, we observe that for an eigenmode to be localised at the interface $x_0$, the associated eigenvalue has to lie in the spectral gaps of the problem \eqref{eq:Lj}-\eqref{eq:FB}. This holds since, for an eigenvalue in a spectral band, \eqref{eq:FB} gives that the magnitude of the associated eigenmode does not decay as $|x|\to \infty$. As a result, we are interested in materials $A$ and $B$ with overlapping band gaps, otherwise edge modes cannot exist in this setting.

Finally, we equip the space $H^2_{\k}$ with the standard inner product $\langle \cdot,\cdot \rangle$, given by
\begin{align}\label{def:inner product}
    \langle u,v \rangle = \int_{\mathbb{R}} u(x) \overline{v}(x) \upd x,
\end{align}
for $ u,v\in H^2_{\k}$.

\section{Preliminaries} \label{sec:Symmetry results}


\subsection{Impedance functions}\hfill

We start by defining the impedance function, as described in Section \ref{sec:Summary}.

\begin{definition}[Interface impedance]\label{def: interf imp}
	Let $\w\in\C$ be in a common band gap of $\mathcal{L}_A$ and $\mathcal{L}_B$. We define the interface impedance of the problem \eqref{eq:Helmholtz} by
    \begin{align}\label{def:interface impedance}
        Z(\w) := Z_A(\w) + Z_B(\w),  \quad \w\in\mathbb{C},
    \end{align}
    where $Z_A$ and $Z_B$ denote the surface impedances of materials $A$ and $B$, respectively, given by
    \begin{align}\label{def:surface impedances}
        Z_A(\w)= -\frac{u_{{A}}(x_0^{-},\w)}{\frac{1}{\ve_A(x_0^-)}\frac{\partial}{\partial x}u_{{A}}(x_0^{-},\w)} \ \ \ \text{ and } \ \ \ Z_B(\w)= \frac{u_{{B}}(x_0^{+},\w)}{\frac{1}{\ve_B(x_0^{+})}\frac{\partial}{\partial x}u_{{B}}(x_0^{+},\w)}, \ \ \ \w\in\mathbb{C},
    \end{align}
	where $u_j$, for $j=A,B$, denotes the solution to the problem $\mathcal{L}_j u = \w^2 u$  which decays and has decaying derivative as $x\to-\infty$ for $Z_A$ and as $x\to\infty$ for $Z_B$.
\end{definition}
We observe that the solutions $u_j$, $j=A,B$, exist and are unique (up to scaling)  if and only if $\omega$ is in a common band gap of both $\mathcal{L}_A$ and $\mathcal{L}_B$.

\begin{lemma}\label{lemma:impedance}
Let $\omega \in \mathbb{C}$. A localised interface mode exists at $\omega$ if and only if
\begin{align}\label{eq:impedancecondition}
        Z_A(\w) + Z_B(\w) = 0.
    \end{align}
\end{lemma}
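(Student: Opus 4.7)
The plan is to prove both directions by a direct gluing argument at the interface $x_0$. The key observation is that a localised interface mode must, on each half-line, be a multiple of the unique decaying half-space solution, and the transmission conditions for the divergence-form operator $\mathcal{L}$ at the interface are precisely encoded by the impedance condition.

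First I would set up the gluing. Given $\omega$ in the common band gap, the solutions $u_A$ (decaying at $-\infty$) and $u_B$ (decaying at $+\infty$) exist and are unique up to scaling, as noted after Definition~\ref{def: interf imp}. Any candidate localised mode must therefore take the form
\begin{equation*}
    u(x) = \begin{cases} c_A \, u_A(x), & x < x_0, \\ c_B \, u_B(x), & x \geq x_0, \end{cases}
\end{equation*}
for constants $c_A, c_B \in \mathbb{C}$. For $u$ to be an $H^2_{\mathrm{loc}}$ solution of $\mathcal{L}u = \omega^2 u$, the standard transmission conditions for the operator \eqref{def:L} must hold at $x_0$, namely continuity of $u$ and of the flux $\tfrac{1}{\ve(x)}\tfrac{\partial u}{\partial x}$. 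This yields the homogeneous linear system
\begin{equation*}
    \begin{pmatrix} u_A(x_0^-) & -u_B(x_0^+) \\[2pt] \tfrac{1}{\ve_A(x_0^-)} u_A'(x_0^-) & -\tfrac{1}{\ve_B(x_0^+)} u_B'(x_0^+) \end{pmatrix} \begin{pmatrix} c_A \\ c_B \end{pmatrix} = 0,
\end{equation*}
which admits a non-trivial solution if and only if its determinant vanishes.

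Next I would show that the determinant condition is exactly \eqref{eq:impedancecondition}. Expanding the determinant gives
\begin{equation*}
    u_B(x_0^+)\tfrac{1}{\ve_A(x_0^-)} u_A'(x_0^-) - u_A(x_0^-)\tfrac{1}{\ve_B(x_0^+)} u_B'(x_0^+) = 0,
\end{equation*}
and dividing by $\tfrac{1}{\ve_A(x_0^-)}u_A'(x_0^-)\cdot \tfrac{1}{\ve_B(x_0^+)}u_B'(x_0^+)$ immediately recovers $Z_A(\omega)+Z_B(\omega)=0$ by the definitions in \eqref{def:surface impedances}. Combining this with the uniqueness of the half-space solutions gives both implications: if \eqref{eq:impedancecondition} holds, pick a non-trivial $(c_A,c_B)$ in the kernel to build a localised mode; conversely, a localised mode forces the linear system to have a non-trivial solution, hence the determinant vanishes.

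The only delicate point will be the degenerate case where one of the denominators in \eqref{def:surface impedances} vanishes, so that an impedance is formally infinite. The cleanest way around this is to phrase the proof throughout in terms of the determinant identity above, which is symmetric and well defined without any non-vanishing assumption; then one notes that generically (in particular whenever $Z_A$ and $Z_B$ are both finite) this determinant identity coincides with \eqref{eq:impedancecondition}. I would also briefly remark on why the case $u_A'(x_0^-)=0$ or $u_B'(x_0^+)=0$ cannot simultaneously pair with $u_A(x_0^-)\ne 0$ or $u_B(x_0^+)\ne 0$ to produce a genuine localised mode without violating the other transmission condition, so the impedance formulation is lossless on the spectrum of interest. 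Aside from this bookkeeping, the argument is essentially a one-line determinant computation built on top of the existence and uniqueness of the decaying half-space solutions.
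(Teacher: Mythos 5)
Your argument is correct and is essentially the same as the paper's: the paper does not prove this lemma itself but delegates it to Theorem~5.2 of the cited reference, whose proof is precisely this gluing argument --- write the candidate mode as $c_A u_A$ and $c_B u_B$ on the two half-lines, impose continuity of $u$ and of the flux $\tfrac{1}{\ve}u'$ at $x_0$, and identify the vanishing of the resulting $2\times 2$ determinant with $Z_A(\omega)+Z_B(\omega)=0$. Your remark that the determinant formulation handles the degenerate case of a vanishing flux (where an impedance is formally infinite) is the right bookkeeping and is consistent with how the paper treats poles of the impedance separately.
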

\begin{proof}
	For the proof of this lemma we refer to Theorem 5.2 in \cite{alexopoulos2023topologically}.
\end{proof}

\subsection{Bulk index}\hfill

In the undamped case, existence of interface modes is intimately linked with a topological index of the material known as the bulk index.

\begin{definition}[Bulk index]
	Assume $\mathrm{Im}(\ve) = 0$. Let $\mathfrak{S}=[a,b]$ denote a band gap of $\mathcal{L}_j$ in $\mathbb{R}$ with Bloch mode $u$ at the band edge $a\in\mathcal{B}$. Then we define the associated bulk topological index $\mathcal{J}_{\mathfrak{S}}$ by
    \begin{align}\label{def:bulk index}
        \mathcal{J}_{\mathfrak{S}} :=
        \begin{cases}
            +1, \ \ \ \text{ if } u \text{ is symmetric at $a$},\\
            -1, \ \ \ \text{ if } u \text{ is anti-symmetric at $a$}.
        \end{cases}
    \end{align}
\end{definition}
In the case of damping, it follows from the symmetry of $\ve_j$ that the Bloch mode $u$ at $\kappa = 0$ or $\kappa = \pi$ satisfies 
\begin{equation}
u(-x) = e^{i \phi} u(x),
\end{equation}
for some $\phi \in \mathbb{R}$ known as the \emph{Zak phase}. In the undamped case, the quantity $\phi$ can only attain multiples of $\pi$ and hence, the bulk index is well-defined. However, in the damped case, the Zak phase may attain any real value. As we shall see, despite the fact that the addition of a non-zero imaginary part breaks the quantization of $\phi$, the interface modes persist in the damped case.

\section{Undamped systems}\label{sec:Undamped}

Let us start by considering the undamped case, \emph{i.e.} we are in the following regime
\begin{center}
    $\mathrm{Im}(\ve_A) = \mathrm{Im}(\ve_B) = 0$.
\end{center}
We denote by $Z^{(U)}$ the interface impedance function for the undamped systems, and by $Z^{(U)}_j$ the impedance function of material $j$, with $j=A,B$. First, we will look at the case of $\w\in\mathbb{R}$ and then we will generalise to consider the equivalent root-finding problem over $\w\in\mathbb{C}$.

\subsection{Real frequencies} \label{subsection:RR}\hfill

This setting has been studied extensively in the literature, \emph{e.g.} \cite{alexopoulos2023topologically, coutant2024surface,thiang2023bulk}. We define $\tilde{\mathfrak{A}}^U_{A},\tilde{\mathfrak{A}}^U_{B}\subset \mathbb{R}$ to be band gaps of $\mathcal{L}_A$ and $\mathcal{L}_B$, respectively, such that 
$$
\tilde{\mathfrak{A}}^U := \tilde{\mathfrak{A}}^U_{A} \cap \tilde{\mathfrak{A}}^{U}_{B}\ne\emptyset.
$$ 
Let $\mathcal{J}_{A}$ and $\mathcal{J}_{B}$ be the bulk topological indices associated to the material $A$ and the material $B$, respectively, in $\tilde{\mathfrak{A}}^U$. 

In \emph{e.g.} \cite{alexopoulos2023topologically, coutant2024surface,thiang2023bulk}, existence and uniqueness of localised interface modes in a band gap is proved. Specifically, we have the following result.

\begin{theorem}\label{thm:bulksum} 
    If
    \begin{align*}
        \mathcal{J}_{A} + \mathcal{J}_{B} \ne 0,
    \end{align*}
    then no interface mode exists. If 
    \begin{align*}
        \mathcal{J}_{A} + \mathcal{J}_{B} = 0,
    \end{align*}
    then there exists a unique frequency $\w_{r}\in\tilde{\mathfrak{A}}^U$, for which an interface mode exists.
\end{theorem}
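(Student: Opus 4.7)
The plan is to apply Lemma~\ref{lemma:impedance} to reduce the existence of localised interface modes in $\tilde{\mathfrak{A}}^U$ to a real root-finding problem for $Z^{(U)} = Z_A^{(U)} + Z_B^{(U)}$ on the common gap, and then to analyse $Z^{(U)}$ through three properties of each summand $Z_j^{(U)}$: real-valuedness, strict monotonicity in $\omega^2$, and topologically determined boundary behaviour at the edges of $\tilde{\mathfrak{A}}_j^U$.

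Because $\mathrm{Im}(\varepsilon_j) = 0$ and $\omega^2$ lies in the resolvent set of the half-line operator associated with $\mathcal{L}_j$, the decaying solution $u_j$ can be chosen real, so each $Z_j^{(U)}$ is real-analytic in $\omega^2 \in \tilde{\mathfrak{A}}_j^U$. Strict monotonicity of $\omega^2 \mapsto Z_j^{(U)}$ then follows from a Hellmann--Feynman / Weyl--Titchmarsh argument: differentiating $\mathcal{L}_j u_j = \omega^2 u_j$ in $\omega^2$, integrating by parts against $u_j$ over the appropriate half-line, and using decay of $u_j$ at infinity shows that $\mathrm{d} Z_j^{(U)} / \mathrm{d}(\omega^2)$ equals, up to a positive constant fixed by the sign conventions in \eqref{def:surface impedances}, the weighted integral $\mu_0 \int u_j(x,\omega)^2 \, \mathrm{d}x$ taken over that half-line. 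Consequently $Z_A^{(U)}$, $Z_B^{(U)}$, and their sum $Z^{(U)}$ are strictly monotone on $\tilde{\mathfrak{A}}^U$.

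The topological step is the identification of the limiting values of $Z_j^{(U)}$ at the two edges of $\tilde{\mathfrak{A}}_j^U$ in terms of $\mathcal{J}_j$. A band edge of $\mathcal{L}_j$ occurs at quasi-momentum $\kappa \in \{0,\pi\}$, and inversion symmetry of $\varepsilon_j$ about $x_0$ forces the corresponding Bloch mode $u$ to be either symmetric about $x_0$ (so that $u(x_0) \ne 0$ and $\partial_x u(x_0) = 0$) or anti-symmetric (so that $u(x_0) = 0$ and $\partial_x u(x_0) \ne 0$). Expanding the decaying half-line solution $u_j$ in terms of Bloch modes and passing to the limit from inside the gap then shows that $Z_j^{(U)}$ diverges to $\pm\infty$ at the edge whose Bloch mode is symmetric and tends to $0$ at the edge whose mode is anti-symmetric, with the sign of the divergence controlled by $\mathcal{J}_j$. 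Combined with the monotonicity already established, this forces $Z_j^{(U)}$ to keep a constant sign $s(\mathcal{J}_j)$ throughout its gap, with $s(+1) = -s(-1)$.

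Putting these together: if $\mathcal{J}_A + \mathcal{J}_B \ne 0$, the signs of $Z_A^{(U)}$ and $Z_B^{(U)}$ agree on $\tilde{\mathfrak{A}}^U$ and $Z^{(U)}$ never vanishes, whereas if $\mathcal{J}_A + \mathcal{J}_B = 0$ the signs are opposite and the strictly monotone function $Z^{(U)}$ changes sign across $\tilde{\mathfrak{A}}^U$, producing a unique zero $\omega_r$ by the intermediate value theorem together with monotonicity. The main obstacle is the topological step: translating the symmetry type of the band-edge Bloch mode into the correct sign and divergence behaviour of the half-line Weyl--Titchmarsh-type function $Z_j^{(U)}$ requires a careful Bloch-mode expansion of the decaying solution near the band edge, and this is precisely the bulk--edge correspondence established in the cited works \cite{alexopoulos2023topologically, coutant2024surface, thiang2023bulk}.
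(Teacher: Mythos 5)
Your proposal follows essentially the same route as the paper, which does not prove Theorem~\ref{thm:bulksum} itself but attributes it to \cite{alexopoulos2023topologically, coutant2024surface, thiang2023bulk}: reduce to the root-finding problem $Z^{(U)}=0$ via Lemma~\ref{lemma:impedance}, use real-valuedness and strict monotonicity of the surface impedances on the common gap, and read off the sign and divergence of each $Z_j^{(U)}$ at the band edges from the symmetry type of the edge Bloch mode encoded in $\mathcal{J}_j$. Like the paper, you defer the delicate band-edge sign analysis to those references, so your sketch supplies at least as much as the paper's own treatment of this statement.
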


\subsection{Complex frequencies}\label{subsection:RC}\hfill

Let us now view the problem of finding localised eigenmodes of the undamped system as a root-finding problem for $\w$ in the complex plane. Since the bands are real, the band gap region $\mathfrak{A}^U$ consists of the entire complex plane $\mathbb{C}$ except countably many intervals on the real axis. As established by the following results, there are no additional interface modes with frequency away from the real axis.
\begin{theorem}\label{thm:RC}
	The surface impedance $Z^{(U)}$ has no roots in $\mathbb{C}\setminus \mathbb{R}$.
\end{theorem}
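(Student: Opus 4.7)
The plan is to reduce the statement to a self-adjointness-type observation. By Lemma~\ref{lemma:impedance}, a root $\w_0$ of $Z^{(U)}$ in the common band gap corresponds to a localised interface mode, \emph{i.e.} a nontrivial $u$ with $\mathcal{L}u=\w_0^2u$, continuous $u$ and $\tfrac{1}{\ve}\partial_x u$ at the interface $x_0$, and exponential decay (together with its derivative) as $|x|\to\infty$; this last point uses that, for $\w_0$ in the band gap of each material, the associated transfer matrix has eigenvalues of modulus $\ne 1$, so the decaying half-space solutions $u_{A}$, $u_{B}$ used to define $Z^{(U)}_{A}$ and $Z^{(U)}_{B}$ are genuine $H^2_{\mathrm{loc}}$ solutions. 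In particular, $u,\partial_x u\in L^2(\mathbb{R})$.

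The key step is then an energy identity. I would test $\mathcal{L}u=\w_0^2u$ against $\bar u$, split the integral at $x_0$, and integrate by parts on each of $(-\infty,x_0)$ and $(x_0,\infty)$. The boundary contributions at $\pm\infty$ vanish by exponential decay of $u$ and $\partial_x u$, while those at $x_0^{\pm}$ take the form $\tfrac{1}{\ve(x_0^\pm)}(\partial_x u)(x_0^\pm)\,\overline{u(x_0^\pm)}$ and cancel exactly by continuity of $u$ and of $\tfrac{1}{\ve}\partial_x u$ across $x_0$ (which is precisely the matching condition encoded by $Z^{(U)}_{A}(\w_0)+Z^{(U)}_{B}(\w_0)=0$). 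What remains is
\begin{equation*}
    \w_0^2 \int_{\mathbb{R}} |u(x)|^2\,\upd x \;=\; \int_{\mathbb{R}} \frac{1}{\mu_0\,\ve(x)}\bigl|\partial_x u(x)\bigr|^2\,\upd x.
\end{equation*}

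In the undamped regime $\mathrm{Im}(\ve)=0$ with $\ve>0$, the right-hand side is real and non-negative, and the coefficient of $\w_0^2$ is strictly positive because $u\not\equiv 0$. Hence $\w_0^2\in\mathbb{R}_{\ge 0}$. Writing $\w_0=a+ib$ with $b\ne 0$ (as $\w_0\notin\mathbb{R}$), one has $\w_0^2=a^2-b^2+2iab$; reality of $\w_0^2$ forces $a=0$, whence $\w_0^2=-b^2<0$, contradicting $\w_0^2\ge 0$. Therefore $Z^{(U)}$ cannot vanish off the real axis.

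The only genuinely non-routine point is justifying that the patched mode $u$ is a bona fide $L^2$ solution satisfying the two transmission conditions, so that the boundary terms at $x_0$ actually cancel; this is where the assumption that $\w_0$ lies in a \emph{common} band gap is indispensable, since it provides the two decaying Floquet half-space solutions. Once this is established, the integration-by-parts identity is standard and the contradiction is immediate.
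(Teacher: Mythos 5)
Your proof is correct and follows essentially the same route as the paper: the paper simply invokes that $\mathcal{L}$ with real positive $\ve$ is self-adjoint and positive-semidefinite (hence $\w_c^2\in\mathbb{R}_{\ge 0}$), and your integration-by-parts energy identity is exactly the explicit quadratic-form computation underlying that claim. If anything, your version is slightly more careful, both in verifying that the boundary terms at $x_0$ cancel for the glued interface mode and in spelling out the final step that $\w_0^2\in\mathbb{R}_{\ge 0}$ together with $\w_0\notin\mathbb{R}$ yields a contradiction.
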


\begin{proof}
	Let us assume that there exists $\w_{c}\in\mathbb{C}\setminus \mathbb{R}$ for which $Z^{(U)}=0$. Since the permittivity function $\ve$ is real-valued and positive, the operators $\mathcal{L}_j$, defined in \eqref{def:L} for permittivity $\ve_j$, is self-adjoint and positive-semidefinite. Hence, it admits only real, non-negative eigenvalues. Using the Helmholtz formulation in \eqref{eq:Helmholtz}, this translates to $\w_{c}^2\geq 0$, so that $\w_{c}^2 \in \R$. This concludes the proof.
\end{proof}

Let us also prove the following lemma which will be of great importance later.

\begin{lemma}\label{lemma:Z_2 no poles}
    The interface impedance function $Z^{(U)}$ has no poles in $\mathbb{C}\setminus \mathbb{R}$.
\end{lemma}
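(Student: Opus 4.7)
The plan is to show that poles of $Z^{(U)}$ can only occur on the real axis by treating $Z_A$ and $Z_B$ separately and reducing a pole condition off $\mathbb{R}$ to a contradiction with the self-adjointness of a Neumann half-line operator.

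First I would translate the pole condition. From \eqref{def:surface impedances}, $Z_A$ has a pole at $\omega_p \in \mathbb{C}\setminus\mathbb{R}$ precisely when the denominator vanishes, i.e.\ $\frac{\partial u_A}{\partial x}(x_0^-,\omega_p)=0$ (recall $\ve_A(x_0^-)\neq 0$ by positivity). The value $u_A(x_0^-,\omega_p)$ is automatically non-zero: otherwise, by uniqueness for the second-order linear ODE $\mathcal{L}_A u = \omega_p^2 u$, the decaying solution $u_A$ would vanish identically, contradicting its definition. Likewise, $Z_B$ has a pole at $\omega_p$ only if $\frac{\partial u_B}{\partial x}(x_0^+,\omega_p)=0$.

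Next I would use the Floquet-Bloch structure to get an $L^2$ eigenfunction. For undamped $\ve_A$, the Floquet-Bloch spectrum of $\mathcal{L}_A$ in the $\omega^2$ variable lies in $\mathbb{R}_{\geq 0}$; any $\omega \in \mathbb{C}\setminus\mathbb{R}$ has $\omega^2 \notin \mathbb{R}_{\geq 0}$ and thus lies in a spectral gap. Standard Hill/Floquet theory for the periodic ODE $\mathcal{L}_A u = \omega^2 u$ then gives that the monodromy matrix has no Floquet multiplier on the unit circle, so a unique (up to scaling) solution decays exponentially as $x\to-\infty$; this is exactly $u_A$, and in particular $u_A|_{(-\infty,x_0)}\in L^2(-\infty,x_0)$. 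Combined with the Neumann condition $\frac{\partial u_A}{\partial x}(x_0^-,\omega_p)=0$ from the previous step, $u_A|_{(-\infty,x_0)}$ is then a non-trivial $L^2$ eigenfunction of the Neumann realisation of $\mathcal{L}_A$ on $(-\infty,x_0)$, with eigenvalue $\omega_p^2$.

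Finally I would invoke self-adjointness to conclude. Because $\ve_A>0$ and $\mu_0>0$ in the undamped case, the Neumann realisation of $\mathcal{L}_A$ on $L^2(-\infty,x_0)$ is self-adjoint and positive-semidefinite: a routine integration by parts on the quadratic form $\mu_0^{-1}\int_{-\infty}^{x_0}\ve_A^{-1}|u'|^2\,\upd x$ produces a boundary term at $x_0$ which is annihilated by the Neumann condition, and the decay at $-\infty$ annihilates the boundary term there. Hence $\omega_p^2\in\mathbb{R}_{\geq 0}$, so $\omega_p\in\mathbb{R}$, contradicting $\omega_p \in \mathbb{C}\setminus\mathbb{R}$. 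The same argument on $(x_0,+\infty)$ rules out poles of $Z_B$, and hence of $Z^{(U)}=Z_A+Z_B$, off the real axis.

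The main obstacle is the Floquet-theoretic step: one must be careful that for $\omega$ with non-zero imaginary part the relevant decaying Floquet solution is actually exponentially decaying (rather than merely bounded), so that the resulting eigenfunction belongs to $L^2$ and the self-adjointness argument applies. Once this $L^2$ setting is secured, the quadratic-form computation is standard.
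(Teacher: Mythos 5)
Your proposal is correct and follows essentially the same route as the paper: a pole off the real axis forces a Neumann-type eigenfunction of $\mathcal{L}_A$ (or $\mathcal{L}_B$) on a half-line, and self-adjointness of that realisation forces the eigenvalue $\omega_p^2$ to be real, giving a contradiction. Your version is in fact slightly tighter in two spots the paper glosses over — you justify via Floquet theory that the decaying solution is genuinely exponentially decaying (hence $L^2$, so the self-adjointness argument applies), and you use positive-semidefiniteness to get $\omega_p^2 \geq 0$ and hence $\omega_p\in\mathbb{R}$ directly, avoiding the paper's extra appeal to "frequencies with non-zero real parts" to exclude purely imaginary $\omega_p$.
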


\begin{proof}
    For $Z^{(U)}$ to have a pole, it means that there exists $\w_p\in\mathbb{C}\setminus \mathbb{R}$ such that
    \begin{align*}
        \frac{\partial}{\partial x}u_A(x_0^-,\w_p)=0 \quad \text{or} \quad \frac{\partial}{\partial x}u_B(x_0^-,\w_p)=0,
    \end{align*}
    where $u_A$ and $u_B$ are defined as in Definition \ref{def: interf imp}. Let us assume that $\frac{\partial}{\partial x}u_A(x_0^-,\w_p)=0$.
    This implies that the boundary value problem
    \begin{align}\label{eq:boundaryvalueproblem}
    \left\{
    \begin{aligned}
        &\mathcal{L}_{{A}} u = \w_p^2 u, \quad x\in(-\infty,x_0],\\
        &\frac{\partial}{\partial x} u_{{A}}(x_0^-) = 0,\\
        &u_A, \frac{\partial}{\partial x} u_{{A}} \to 0 \text{ as } x\to -\infty,
    \end{aligned}
    \right.
    \end{align}
    admits a solution, where the differential operator $\mathcal{L}_{{A}}$ is given by \eqref{def:L} for permittivity function $\ve_A$. However,  the differential operator $\mathcal{L}_{{A}}$ is self-adjoint with respect to the inner product \eqref{def:inner product} on the space of functions satisfying the boundary conditions of \eqref{eq:boundaryvalueproblem} (Appendix \ref{app:boundary pb}). Hence, it admits only real eigenvalues which implies that $\w_p^2\in\mathbb{R}$. Since, in general, we consider frequencies with non-zero real parts, this implies that $\mathrm{Im}(\w_p)=0$, which is a contradiction. The case $Z^{(U)}_{B}=\infty$ follows the exact same reasoning. This concludes the proof.
\end{proof}

Combining this result with the dependence of $u$ on the coefficients of \eqref{eq:Helmholtz}, we obtain the following:

\begin{lemma}\label{lemma:case 2:Z hol}
    The interface impedance function $Z^{(U)}$ is holomorphic in $\mathbb{C}\setminus \mathbb{R}$.
\end{lemma}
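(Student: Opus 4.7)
The plan is to write $Z^{(U)}$ as a sum of two scale-invariant ratios built from solutions of an ODE with holomorphic parameter dependence, and to invoke Lemma \ref{lemma:Z_2 no poles} to exclude the only possible singularities.

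First, I would reduce the claim to showing that each surface impedance $Z_A^{(U)}$ and $Z_B^{(U)}$ in \eqref{def:surface impedances} is holomorphic on $\mathbb{C}\setminus\mathbb{R}$; then $Z^{(U)} = Z_A^{(U)}+Z_B^{(U)}$ inherits holomorphy by linearity. By symmetry, it is enough to treat $j=A$. The key observation is that $Z_A^{(U)}(\omega)$ is the scale-invariant quotient of the Cauchy data of $u_A(\cdot,\omega)$ at $x_0^-$, so its value does not depend on any normalisation convention for $u_A$.

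Next, I would exhibit, near any fixed $\omega_0\in\mathbb{C}\setminus\mathbb{R}$, a locally holomorphic representative of $u_A$. Integrating $\mathcal{L}_A u = \omega^2 u$ over one period with the identity as initial Cauchy data yields the monodromy matrix $T_A(\omega)\in\mathbb{C}^{2\times 2}$; by the classical theorem on holomorphic dependence of solutions of linear ODEs on parameters, $T_A(\omega)$ is entire in $\omega$. Because $\mathcal{L}_A$ is self-adjoint and positive-semidefinite (see the proof of Theorem \ref{thm:RC}), every $\omega\in\mathbb{C}\setminus\mathbb{R}$ satisfies $\omega^2\notin\sigma(\mathcal{L}_A)$, so $\omega$ lies in a band gap of $\mathcal{L}_A$ and the Floquet multipliers satisfy $|\lambda_-(\omega)|<1<|\lambda_+(\omega)|$; in particular they are simple. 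Analytic perturbation theory for simple eigenvalues (e.g.\ Kato) then provides local holomorphic selections $\omega\mapsto \lambda_-(\omega)$ and $\omega\mapsto v_-(\omega)\in\mathbb{C}^2$. Using $v_-(\omega)$ as Cauchy data at $x_0^-$ and propagating back along $(-\infty,x_0]$ gives a local branch $\widetilde u_A(\cdot,\omega)$ of the decaying half-line solution whose evaluation $\widetilde u_A(x_0^-,\omega)$ and derivative $\partial_x \widetilde u_A(x_0^-,\omega)$ are holomorphic in $\omega$ by holomorphic dependence of the ODE flow on $\omega$.

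Finally, scale-invariance of the definition of $Z_A^{(U)}$ shows
\[
Z_A^{(U)}(\omega) = -\frac{\widetilde u_A(x_0^-,\omega)}{\tfrac{1}{\varepsilon_A(x_0^-)}\partial_x \widetilde u_A(x_0^-,\omega)},
\]
which is locally a ratio of two holomorphic functions; Lemma \ref{lemma:Z_2 no poles} guarantees that the denominator does not vanish on $\mathbb{C}\setminus\mathbb{R}$, so $Z_A^{(U)}$ is holomorphic on each coordinate patch. The value of $Z_A^{(U)}$ is independent of the local choice of $v_-(\omega)$, so the local pieces glue to a single holomorphic function on $\mathbb{C}\setminus\mathbb{R}$, and the same holds for $Z_B^{(U)}$.

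The main obstacle is that the decaying solution $u_A$ itself does not admit a globally holomorphic normalisation on $\mathbb{C}\setminus\mathbb{R}$: the eigenvector $v_-(\omega)$ can in principle pick up multi-valued factors as $\omega$ varies, so one cannot simply quote ``$u_A(x,\omega)$ is entire in $\omega$''. Passing to the scale-invariant impedance ratio is precisely what removes this ambiguity; once this is observed, the remainder of the proof is a routine combination of analytic dependence of ODE solutions on parameters, simple-eigenvalue perturbation theory, and Lemma \ref{lemma:Z_2 no poles}.
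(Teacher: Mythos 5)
Your argument is correct and follows the same route the paper intends: the paper dispatches this lemma in one line (``combining [Lemma~\ref{lemma:Z_2 no poles}] with the dependence of $u$ on the coefficients of \eqref{eq:Helmholtz}''), and your proof is a careful expansion of exactly that — analytic dependence of the ODE flow on $\omega$, simple Floquet multipliers off the real axis giving a locally holomorphic decaying branch, scale invariance of the impedance ratio to remove the normalisation ambiguity, and Lemma~\ref{lemma:Z_2 no poles} to rule out zeros of the denominator. No gaps; your treatment of the global-normalisation subtlety is a detail the paper leaves implicit.
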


\section{Damped systems}\label{sec:Damped}

Let us now move on to the case of complex-valued permittivity functions $\ve_j$, for $j=A,B$, with damping represented by a small imaginary part. More precisely, we introduce an arbitrary parameter $0<\d\ll1$ and we assume that
\begin{align}\label{limiting behaviour of damping}
    \lim_{\d\to0} \sup_{x\in\R} |\mathrm{Im}(\ve_A(x))| = \lim_{\d\to0} \sup_{x\in\R} |\mathrm{Im}(\ve_B(x))| = 0.
\end{align}
Essentially, by considering small imaginary parts for $\ve_A$ and $\ve_B$, we view the case of complex permittivities as a perturbation of the case of real permittivities. Indeed, let us denote by $Z^{(D)}_j$, $j=A,B$, the impedance functions of materials $A$ and $B$, respectively, and let $Z^{(D)}$ be the associated interface impedance. Then,
\begin{align}\label{limit Z3 to Z2}
    \lim_{\d\to0} Z^{(D)}_{j}(\w) = Z^{(U)}_{j}(\w),
\end{align}
for a fixed frequency $\w\in\mathbb{C}$ and for $j=A,B$.

We denote by $\mathfrak{A}^{D}_{A}$ and $\mathfrak{A}^{D}_{B}$ two band gaps of $\mathcal{L}_A$ and $\mathcal{L}_B$, respectively, in this regime, such that  
$$
\mathfrak{A}^{D} := \mathfrak{A}^{D}_{A} \cap \mathfrak{A}^{D}_{B} \ne \emptyset.
$$

\begin{lemma}\label{lemma:case 3:Z_j hol}
    The impedance functions $Z_{A}^{(D)}$ and $Z_{B}^{(D)}$ have no poles in the interior of $\mathfrak{A}^{D}$. 
\end{lemma}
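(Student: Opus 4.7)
The plan is to argue by contradiction, replacing the self-adjointness argument of Lemma~\ref{lemma:Z_2 no poles} (which fails once $\mathrm{Im}(\varepsilon_j) \neq 0$) by a reflection-symmetry argument. The key preliminary observation is that the inversion-symmetry assumption $\varepsilon_j(x+h) = \varepsilon_j(x+1-h)$ of Section~\ref{subsection:MS:Damped systems} combined with periodicity forces $\varepsilon_j(x_0+s) = \varepsilon_j(x_0-s)$ for all $s\in\mathbb{R}$. In particular, $\varepsilon_j$ (viewed as a periodic function on the full line) is continuous at $x_0$, and the reflection $(Tu)(x) := u(2x_0 - x)$ commutes with the periodic bulk operator $\mathcal{L}_j$.

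Suppose, for contradiction, that $\omega_p \in \mathrm{int}(\mathfrak{A}^D)$ is a pole of $Z_A^{(D)}$. By Definition~\ref{def: interf imp}, this forces $\partial_x u_A(x_0^-, \omega_p) = 0$, where $u_A(\cdot,\omega_p)$ is the unique (up to scaling) solution of $\mathcal{L}_A u = \omega_p^2 u$ on $(-\infty, x_0]$ that decays as $x\to-\infty$; such a solution exists because $\omega_p$ lies in the bulk band gap of $A$. I would then glue in a reflected copy by setting
\begin{align*}
U(x) := \begin{cases} u_A(x, \omega_p), & x \leq x_0, \\ u_A(2x_0 - x, \omega_p), & x > x_0. \end{cases}
\end{align*}
Since $T\mathcal{L}_A = \mathcal{L}_A T$, the function $U$ solves $\mathcal{L}_A U = \omega_p^2 U$ on each half-line. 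The Neumann condition $\partial_x u_A(x_0^-) = 0$ together with the chain-rule identity $\partial_x U(x_0^+) = -\partial_x u_A(x_0^-) = 0$, and the continuity of $\varepsilon_A$ at $x_0$, ensure that both $U$ and $\varepsilon_A^{-1}\partial_x U$ match across $x_0$, so $U\in H^2_\mathrm{loc}(\mathbb{R})$ and $\mathcal{L}_A U = \omega_p^2 U$ holds weakly on all of $\mathbb{R}$. Exponential decay of $u_A$ in the bulk band gap then yields $U \in L^2(\mathbb{R})$.

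To close the contradiction, I would show that the full-line periodic operator $\mathcal{L}_A$ admits no $L^2$ eigenfunction at eigenvalue $\omega_p^2 \in \mathfrak{A}^D_A$. Writing $\mathcal{L}_A u = \omega^2 u$ as a first-order system in $(u, \varepsilon_A^{-1}\partial_x u)$, the coefficient matrix is traceless, so the monodromy matrix $M_A(\omega)$ has unit determinant. For $\omega$ in the (complex) band gap, the two Floquet multipliers $\mu_\pm$ satisfy $\mu_+\mu_- = 1$ with neither on the unit circle, and the eigendirection producing decay at $-\infty$ differs from the one producing decay at $+\infty$. Hence no non-zero solution can decay at both ends, contradicting the existence of $U$. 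The claim for $Z_B^{(D)}$ is obtained by the symmetric argument after swapping $A\leftrightarrow B$ and $-\infty\leftrightarrow +\infty$.

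The main obstacle is that, in contrast to Lemma~\ref{lemma:Z_2 no poles}, self-adjointness is unavailable to force the half-line Neumann spectrum to be real, so the inversion symmetry is the only structural input one can lean on. The most delicate step is checking that the gluing genuinely produces an $H^2_\mathrm{loc}$ function: this requires that inversion symmetry really does force continuity of $\varepsilon_A$ at $x_0$, and that the Neumann condition on the left is transported faithfully through $T$ to the right so that no spurious distributional mass appears at the interface.
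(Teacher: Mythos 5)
Your proof is correct, but it takes a genuinely different route from the paper. The paper argues perturbatively: it assumes $Z_A^{(D)}$ has a pole at some $\w_p$ in the interior of $\mathfrak{A}^D$, lets $\d\to 0$ using \eqref{limit Z3 to Z2}, and concludes that $Z_A^{(U)}$ would then have a pole at $\w_p$, contradicting Lemma~\ref{lemma:Z_2 no poles} (whose proof rests on self-adjointness of the undamped half-line Neumann problem). That argument is tied to the small-damping regime. You instead give a direct, structural argument: the inversion symmetry of $\ve_j$ about the cell endpoints (equivalently, by periodicity, about $x_0$) lets you reflect the half-line Neumann solution $u_A$ into a nontrivial full-line solution of the bulk problem $\mathcal{L}_A U=\w_p^2U$ decaying at both ends, and the Floquet dichotomy in a gap --- $\det T_p^{(A)}=1$ with one multiplier strictly inside and one strictly outside the unit circle, which is exactly the content of Lemmas~\ref{lem:lambda} and the lemma following \eqref{eq:dispersion_relation} --- forces $\tilde U(x_0)$ to lie simultaneously in two independent eigendirections, hence $U\equiv 0$, a contradiction (note $u_A(x_0^-)\neq 0$, since otherwise $u_A\equiv0$ by ODE uniqueness). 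Your version buys more: it is valid for arbitrary damping rather than only in the limit $\d\to0$, and it avoids the exchange of the limits $\d\to0$ and $\w\to\w_p$ that the paper's argument implicitly requires. The price is that it leans on the inversion symmetry of $\ve_j$ (a standing assumption of the paper in any case) and imports the transfer-matrix facts from Section~\ref{sec:Asymptotic behaviour}. Two of the points you flag as delicate are in fact harmless: continuity of $\ve_A$ at $x_0$ is not needed for the transmission condition (both fluxes $\ve_A^{-1}\partial_xU(x_0^{\pm})$ vanish identically), though the identity $\ve_A(2x_0-s)=\ve_A(s)$ \emph{is} essential for the reflected piece to solve the same equation on $(x_0,\infty)$; and in a gap the two Floquet multipliers are necessarily distinct (equality would force them onto the unit circle since their product is $1$), so the two eigendirections are genuinely independent and no Jordan-block degeneracy can occur.
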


\begin{proof}
    We treat the case of $Z_{A}^{(D)}$. Then, in the same reasoning, the result for $Z_{B}^{(D)}$ will follow. We denote by $(\mathfrak{A}^{D})^\circ$ the interior of $\mathfrak{A}^D$. 
    
    Let us assume that there exists $\w_p\in(\mathfrak{A}^{D})^\circ$ such that
    \begin{align*}
        Z_{A}^{(D)}(\w_p)=\infty.
    \end{align*}
    Then, for all $\ve_p>0$, there exists $r>0$ such that, for all $\w\in(\mathfrak{A}^{D})^\circ$ satisfying $|\w-\w_p|<r$, it holds that
    \begin{align*}
        \Big|Z_{A}^{(D)}(\w)\Big| > \ve_p.
    \end{align*}
    Then, applying the triangle inequality, we get
    \begin{align*}
        \Big|Z^{(U)}_{A}(\w)\Big| + \Big|Z_{A}^{(D)}(\w)-Z^{(U)}_{A}(\w)\Big| > \Big|Z_{A}^{(D)}(\w)\Big| > \ve_p.
    \end{align*}
    Now, letting our arbitrary parameter $\d\to0$, from \eqref{limit Z3 to Z2}, we have
    \begin{align*}
        \lim_{\d\to0} \Big|Z_{A}^{(D)}(\w)-Z^{(U)}_{A}(\w)\Big| = 0,
    \end{align*}
    which gives
    \begin{align*}
        \Big|Z_{A}^{(U)}(\w)\Big| > \ve_p.
    \end{align*}
    This translates to $Z^{(U)}_{A}$ having a pole at $\w_p$, which is a contradiction from Lemma \ref{lemma:Z_2 no poles}. This concludes the proof.
\end{proof}

Combining this with the dependence of a solution $u$ on the coefficients of \eqref{eq:Helmholtz}, we have:

\begin{lemma}
    The impedance functions $Z_{A}^{(D)}$ and $Z_{B}^{(D)}$ are holomorphic in the interior of $\mathfrak{A}^{D}$.
\end{lemma}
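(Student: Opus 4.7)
The plan is to deduce holomorphicity from Lemma \ref{lemma:case 3:Z_j hol} (no poles in $(\mathfrak{A}^{D})^\circ$) together with the fact that the impedance function $Z_A^{(D)}(\omega)$ is \emph{a priori} meromorphic on the interior of the common band gap. Concretely, recalling the formula
\begin{align*}
    Z_A^{(D)}(\w) = -\frac{u_A(x_0^-,\w)}{\frac{1}{\ve_A(x_0^-)}\frac{\partial}{\partial x}u_A(x_0^-,\w)},
\end{align*}
I would show that the numerator and denominator are both holomorphic functions of $\omega$ in $(\mathfrak{A}^{D})^\circ$. Then the quotient is meromorphic, with possible poles exactly where the denominator vanishes. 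By Lemma \ref{lemma:case 3:Z_j hol} those poles do not occur in the interior of $\mathfrak{A}^D$, and so $Z_A^{(D)}$ is holomorphic there; the same argument applies to $Z_B^{(D)}$.

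The main step is therefore to justify that the decaying half-line solution $u_A(\cdot,\omega)$ can be chosen to depend holomorphically on $\omega$ throughout $(\mathfrak{A}^{D})^\circ$. The standard route is via the transfer matrix $M(\omega)$ associated to the periodic coefficient $\ve_A$: $u_A(\cdot,\omega)$ is (up to a scalar) the Bloch-type solution corresponding to the eigenvalue $\lambda(\omega)$ of $M(\omega)$ with $|\lambda(\omega)|<1$. Since the entries of $M(\omega)$ are entire in $\omega$ (they are values at $x_0+1$ of fundamental solutions of an ODE whose coefficients depend holomorphically on $\omega$), and since for $\omega\in(\mathfrak{A}^{D})^\circ$ one has $|\lambda(\omega)|\ne 1$, the relevant eigenvalue is separated from the other eigenvalue of $M(\omega)$ on a neighbourhood of each point of $(\mathfrak{A}^{D})^\circ$. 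Standard holomorphic perturbation theory (Kato) then provides a holomorphic selection of $\lambda(\omega)$ and a corresponding holomorphic eigenvector, which one normalises to produce a holomorphic $u_A(x_0^-,\omega)$ and $\partial_x u_A(x_0^-,\omega)$.

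With both quantities holomorphic, $Z_A^{(D)}$ is meromorphic in $(\mathfrak{A}^{D})^\circ$ with poles only at the zeros of the denominator. Invoking Lemma \ref{lemma:case 3:Z_j hol}, no such zeros lie in $(\mathfrak{A}^{D})^\circ$, so $Z_A^{(D)}$ is holomorphic on the interior of the common band gap. The same argument, using the decaying Bloch eigenvalue of the transfer matrix of material $B$ at $+\infty$ (corresponding to $|\lambda|<1$ viewed in the outgoing direction, equivalently the other eigenvalue of modulus $>1$ of the cell-to-cell transfer matrix), gives the analogous statement for $Z_B^{(D)}$.

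I expect the main obstacle to be the rigorous identification of the holomorphic selection of the decaying solution near points of $(\mathfrak{A}^{D})^\circ$ — one must make sure that the two eigenvalues of the transfer matrix do not cross inside the gap (which is precisely the content of being in the interior of the gap, where $|\lambda|\ne 1$ forces the two eigenvalues to have distinct moduli) so that Kato-type analytic perturbation theory applies. Once that is secured, the conclusion follows immediately from Lemma \ref{lemma:case 3:Z_j hol}.
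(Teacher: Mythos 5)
Your argument is correct and matches the paper's (unstated) reasoning: the lemma is presented there as an immediate consequence of Lemma~\ref{lemma:case 3:Z_j hol} together with the holomorphic dependence of the half-line solution on $\omega$, which is precisely what you supply via the entire transfer matrix and the modulus separation $|\lambda_1|<1<|\lambda_2|$ inside the gap, making $Z_j^{(D)}$ meromorphic with poles excluded by the no-pole lemma. The only slip is your parenthetical about material $B$: since $B$ extends to $+\infty$, its decaying solution corresponds directly to the eigenvalue of $T_p^{(B)}$ of modulus less than one (it is material $A$, decaying towards $-\infty$, for which the roles of the two eigenvalues of the forward cell-to-cell matrix are interchanged), but this does not affect the argument.
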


Applying Lemma~\ref{lemma:case 3:Z_j hol} on \eqref{def:interface impedance}, we get the following result.

\begin{corollary}\label{cor:Z_3 hol}
    The interface impedance $Z^{(D)}$ is holomorphic in the interior of $\mathfrak{A}^{D}$.
\end{corollary}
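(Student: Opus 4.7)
The plan is to deduce this corollary directly from the preceding (unnumbered) lemma together with the definition of the interface impedance. By Definition~\ref{def: interf imp}, we have the pointwise identity
\begin{equation*}
    Z^{(D)}(\w) = Z_{A}^{(D)}(\w) + Z_{B}^{(D)}(\w)
\end{equation*}
on the common band gap $\mathfrak{A}^{D}$. The unnumbered lemma immediately preceding the corollary asserts that each of $Z_{A}^{(D)}$ and $Z_{B}^{(D)}$ is holomorphic on the interior $(\mathfrak{A}^{D})^\circ$.

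Since the sum of two holomorphic functions on a common open domain is holomorphic on that domain, I would simply invoke this elementary fact from complex analysis to conclude that $Z^{(D)}$ is holomorphic on $(\mathfrak{A}^{D})^\circ$. No additional estimates or limit arguments are required beyond what has already been established for the individual impedances $Z_{A}^{(D)}$ and $Z_{B}^{(D)}$.

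There is no real obstacle here: the work has already been done in Lemma~\ref{lemma:case 3:Z_j hol} (ruling out poles via a triangle inequality argument combined with the limiting behaviour \eqref{limit Z3 to Z2} and Lemma~\ref{lemma:Z_2 no poles}) and in the subsequent lemma upgrading absence of poles to holomorphy. The only thing to verify is that the reader sees that holomorphy is preserved under finite sums on a common domain, and that the domain in question, namely $(\mathfrak{A}^{D})^\circ$, is indeed common to both $Z_{A}^{(D)}$ and $Z_{B}^{(D)}$ by construction of $\mathfrak{A}^{D} = \mathfrak{A}^{D}_A \cap \mathfrak{A}^{D}_B$. The proof can therefore be written in one or two lines.
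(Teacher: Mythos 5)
Your proposal matches the paper's own (implicit) argument exactly: the paper derives the corollary by applying the preceding lemmas to the definition $Z^{(D)} = Z_A^{(D)} + Z_B^{(D)}$ and using that a sum of holomorphic functions on a common open set is holomorphic. No differences in approach and no gaps.
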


This leads to the main result for damped systems.

\begin{theorem}\label{thm:CC}
     Let $\mathfrak{A}^U$ and $\mathfrak{A}^D$ be the spectral band gaps in the undamped and damped cases, respectively.
     Assume that $Z^{(U)}$ has a root in $\mathfrak{A}^U\cap\mathfrak{A}^D$. Then, for small $\d$, $Z^{(D)}$ has a unique root in $\mathfrak{A}^D$ which converges to the root of $Z^{(U)}$ as $\d \to 0$.
\end{theorem}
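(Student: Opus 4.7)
The plan is to apply Rouché's theorem, treating the damped impedance $Z^{(D)}$ as a holomorphic perturbation of the undamped impedance $Z^{(U)}$. By Theorem~\ref{thm:bulksum} combined with Theorem~\ref{thm:RC}, $Z^{(U)}$ has a \emph{unique} root $\omega_U$ in the entire complex band gap $\mathfrak{A}^U$, and by Lemma~\ref{lemma:case 2:Z hol} and Corollary~\ref{cor:Z_3 hol} both $Z^{(U)}$ and $Z^{(D)}$ are holomorphic on the respective (open) gap regions. Pointwise convergence $Z^{(D)} \to Z^{(U)}$ as $\delta \to 0$ is recorded in \eqref{limit Z3 to Z2}. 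The first technical step is to upgrade this to locally uniform convergence on compact subsets of $\mathfrak{A}^U \cap \mathfrak{A}^D$; this will follow from the continuous (in fact holomorphic) dependence of the decaying half-space solutions $u_A, u_B$ on the coefficients of \eqref{eq:Helmholtz}, combined with standard ODE-dependence estimates.

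For existence of a root near $\omega_U$, I would fix a disk $D_r(\omega_U) \subset \mathfrak{A}^U \cap \mathfrak{A}^D$ so small that $\omega_U$ is the only zero of $Z^{(U)}$ in its closure. Setting $m := \min_{\omega \in \partial D_r(\omega_U)} |Z^{(U)}(\omega)| > 0$ by compactness, local uniform convergence produces $\delta_0 > 0$ such that
\begin{equation*}
|Z^{(D)}(\omega) - Z^{(U)}(\omega)| < m \leq |Z^{(U)}(\omega)|, \qquad \omega \in \partial D_r(\omega_U),
\end{equation*}
whenever $\delta \leq \delta_0$. Rouché's theorem then implies that $Z^{(D)}$ has the same number of zeros (with multiplicity) inside $D_r(\omega_U)$ as $Z^{(U)}$, yielding a root $\omega_D \in D_r(\omega_U)$; letting $r \to 0$ gives the convergence $\omega_D \to \omega_U$.

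For uniqueness in all of $\mathfrak{A}^D$, I would argue that on any compact set $K \subset \mathfrak{A}^U \setminus \overline{D_r(\omega_U)}$ one has $|Z^{(U)}| \geq c_K > 0$ (since $\omega_U$ is the unique zero there), hence $|Z^{(D)}| \geq c_K/2$ for sufficiently small $\delta$ by the same uniform convergence; no additional zeros can therefore appear in such compacta. To extend this from compacta to all of $\mathfrak{A}^D$ I would use (i) the asymptotic behaviour of $Z^{(D)}(\omega)$ as $|\omega| \to \infty$, and (ii) the fact that, for small $\delta$, the damped band edges $\partial \mathfrak{A}^D$ lie within a Hausdorff neighbourhood of $\partial \mathfrak{A}^U$ of size $O(\delta)$, so that $\mathfrak{A}^D \setminus \overline{D_r(\omega_U)}$ can be exhausted by compact subsets of $\mathfrak{A}^U \setminus \overline{D_r(\omega_U)}$ on which the previous argument applies.

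The main obstacle I anticipate is the uniform-convergence upgrade of \eqref{limit Z3 to Z2}, particularly near the band edges, where the exponential decay rate of the Bloch solutions $u_A, u_B$ defining the impedances degenerates. This reduces to a perturbation analysis of the complex quasi-momentum labelling the decaying mode, and the hypothesis $\omega_U \in \mathfrak{A}^U \cap \mathfrak{A}^D$ is used crucially here: it guarantees that $\omega_U$ lies strictly inside both gaps and hence away from these marginal edge points, which in turn gives a uniform lower bound on the decay rate on a neighbourhood of $\omega_U$ and makes Rouché's inequality on $\partial D_r(\omega_U)$ available.
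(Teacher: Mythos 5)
Your argument is essentially the paper's own proof: the paper likewise fixes the undamped root $\omega_U$, takes a small neighbourhood $N_\rho\subset\left(\mathfrak{A}^U\cap\mathfrak{A}^D\right)^\circ$, invokes the holomorphy of both impedances there (Lemma~\ref{lemma:Z_2 no poles}, Corollary~\ref{cor:Z_3 hol}) together with the convergence \eqref{limit Z3 to Z2} to obtain the Rouch\'e inequality on $\partial N_\rho$, and concludes that $Z^{(D)}$ and $Z^{(U)}$ have the same number of zeros inside. If anything, your version is more careful on the two points the paper passes over quickly --- the need to upgrade the pointwise limit \eqref{limit Z3 to Z2} to uniform smallness of $Z^{(D)}-Z^{(U)}$ on the boundary circle together with the positive lower bound $m=\min_{\partial D_r}|Z^{(U)}|$, and the extension of uniqueness from the small disk to the remainder of the gap --- so no changes are needed.
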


\begin{proof}
    Since we are in the case of complex frequencies, the spectral band gaps of the operators $\mathcal{L}$ are defined as the complement of the spectral bands. We know that there exists $\w_U\in\mathfrak{A}^U\cap\mathfrak{A}^D$ such that $Z^{(U)}(\w_U)=0$. Now, let us define the set $N_\r$ to be
    \begin{align}\label{def:N_r}
        N_{\r} := \Big\{ \w\in\mathfrak{A}^U\cap\mathfrak{A}^D: \ |\w-\w_U|<\r \Big\}.
    \end{align}
	For small enough $\delta$ we have that $ \subset \Big(\mathfrak{A}^U\cap\mathfrak{A}^D\Big)^\circ$, where $X^\circ$ denotes the interior of the set $X\subset \mathbb{C}$. Then we have that for any $\f>0$ there exists $\r>0$ such that 
    \begin{align*}
         \Big| Z^{(U)}(\w) \Big| < \f,
    \end{align*}
    for all $\w\in N_{\r}$.
    From Lemma~\ref{lemma:Z_2 no poles} and Corollary~\ref{cor:Z_3 hol}, we get that
    \begin{center}
        $Z^{(D)}$ and $Z^{(U)}$ are holomorphic in $N_{\r}$.
    \end{center}
    In addition, from \eqref{limit Z3 to Z2}, we know that
    \begin{align*}
        \lim_{\d\rightarrow0}\Big|Z^{(D)}(\w)-Z^{(U)}(\w)\Big| = 0, \quad \text{for all } \w\in\overline{\mathfrak{A}^U\cap\mathfrak{A}^D},
    \end{align*}
    where $\overline{\mathfrak{A}^U\cap\mathfrak{A}^D}$ denotes the closure of $\mathfrak{A}^U\cap\mathfrak{A}^D$. It follows that, for small $\d$,
    \begin{align}\label{eq:Rouche}
        \Big|Z^{(D)}(\w)\Big| > \Big|Z^{(D)}(\w)-Z^{(U)}(\w)\Big|, \quad \text{for all } \w\in \partial N_{\r},
    \end{align}
    since, for all $\k>0$, 
    \begin{align*}
        \Big|Z^{(D)}(\w)-Z^{(U)}(\w)\Big| < \k \quad \text{ and } \quad \Big|Z^{(D)}(\w)\Big| \ne 0, \quad  \text{for all } \w\in \partial N_{\r},
    \end{align*}
    where $\partial N_{\r}$ denotes the boundary of $N_{\r}$.
    Thus, we can apply Rouch\'e's Theorem \cite[Chapter 1]{ammari2009layer} and obtain that $Z^{(D)}$ and $Z^{(U)}$ have the same number of roots in $N_{\r}$.
\end{proof}

\begin{remark}
    Here, let us note that Theorem \ref{thm:CC} can also be applied between two systems with dampings sufficiently close to each other. The proof follows similar steps and is given in Appendix \ref{app:cor:two dampings}.
\end{remark}

\section{Asymptotic behaviour} \label{sec:Asymptotic behaviour}

Our objective in this section is to obtain the asymptotic behaviour of the modes as $|x|\to\infty$. For this, we will make use of the transfer matrix method associated to this problem.

\subsection{Transfer matrix formulation}\hfill

Let us define
\begin{align}
    \tilde{u}(x) := \Big( u(x) , u'(x) \Big)^\top.
\end{align}

The symmetry we impose on the system is the following:
\begin{align*}
    \ve(x_n + h,\w) = \ve(x_{n+1}-h,\w), \ \ h\in[0,1),\  n\in\mathbb{N}\setminus\{0\}.
\end{align*}

For $j=A,B$, we denote the transfer matrix associated to the periodic cell of material $j$ by $T_p^{(j)}$, as described in \cite{alexopoulos2023topologically}. The following result holds. 
\begin{lemma}\label{lem:lambda}
    The transfer matrix $T_p^{(j)}$, for $j=A,B$, satisfies $\det(T_p^{(j)})=1$.
\end{lemma}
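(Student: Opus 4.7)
The plan is to reduce the determinant of $T_p^{(j)}$ to the ratio of a Wronskian-like quantity evaluated at the two endpoints of a period, and to show that quantity is preserved by the periodicity of $\ve_j$. Concretely, I would fix two linearly independent solutions $u_1,u_2$ of $\mathcal{L}_j u = \omega^2 u$ on the periodic cell $[x_n,x_{n+1}]$ and note that, by definition, $T_p^{(j)}$ acts on the column vectors $\tilde u_1(x), \tilde u_2(x)$, so
\begin{equation*}
    \det(T_p^{(j)}) \;=\; \frac{W(x_{n+1})}{W(x_n)}, \qquad W(x) := u_1(x)u_2'(x) - u_2(x)u_1'(x).
\end{equation*}
It therefore suffices to show $W(x_{n+1}) = W(x_n)$.

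The key technical point is that $\ve_j$ is only piecewise smooth, so a naïve application of Abel's identity to $u'' - (\ve_j'/\ve_j)u' + \mu_0\omega^2\ve_j u = 0$ has to be supplemented with jump conditions. The clean way around this is to work with the \emph{physical} variable $\tfrac{1}{\ve_j}u'$, which is continuous across discontinuities of $\ve_j$ (since the conormal derivative is what is glued by the equation). Define
\begin{equation*}
    \tilde W(x) \;:=\; u_1(x)\cdot\frac{u_2'(x)}{\ve_j(x)} - u_2(x)\cdot\frac{u_1'(x)}{\ve_j(x)}.
\end{equation*}
A direct computation using $\big(u'/\ve_j\big)' = -\mu_0\omega^2 u$ (which is the ODE rewritten in first-order form) shows $\tilde W'(x) = 0$ at every point where $\ve_j$ is smooth, and $\tilde W$ is continuous at jumps of $\ve_j$ because both factors involve only the continuous quantities $u$ and $u'/\ve_j$. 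Hence $\tilde W$ is globally constant.

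Finally, $W(x) = \ve_j(x)\tilde W$, so by periodicity $\ve_j(x_{n+1}) = \ve_j(x_n)$ we obtain $W(x_{n+1}) = W(x_n)$, which yields $\det(T_p^{(j)}) = 1$. The step I expect to require the most care is not any computation per se, but justifying that $W$ is well-defined and transformed correctly through the interior discontinuities of $\ve_j$ inside the period; the use of $\tilde W$, built from the continuous physical variables, bypasses this cleanly and reduces the whole argument to a two-line calculation plus the periodicity hypothesis on $\ve_j$.
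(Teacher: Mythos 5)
Your proof is correct, but it takes a genuinely different route from the paper, which does not argue the lemma directly at all: it simply defers to Section~5.1 of \cite{alexopoulos2023topologically}, where (for the layered, piecewise-constant setting) $T_p^{(j)}$ is assembled as an explicit product of single-layer propagators and interface matrices, and the determinant is computed by multiplying the determinants of the factors, with the contributions from the permittivity jumps telescoping to $1$ over a full period. Your argument instead writes $\det(T_p^{(j)})$ as the ratio $W(x_{n+1})/W(x_n)$ of Wronskians of two independent solutions (valid since $W(x_n)\neq 0$ by their independence) and shows this ratio is $1$ via the conserved quantity $\tilde W = u_1\,u_2'/\ve_j - u_2\,u_1'/\ve_j$, which is the natural first integral of the divergence-form equation $\bigl(u'/\ve_j\bigr)' = -\mu_0\omega^2\ve_j^{0}u$ (more precisely $\bigl(u'/\ve_j\bigr)' = -\mu_0\omega^2 u$) and is continuous across discontinuities of $\ve_j$ because it is built from the transmitted quantities $u$ and $u'/\ve_j$. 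This buys you two things the paper's reference does not: the argument works for arbitrary piecewise smooth (and complex-valued, damped) $\ve_j$, not only piecewise constant profiles, and it isolates exactly where periodicity enters, namely through $W(x)=\ve_j(x)\tilde W$ and $\ve_j(x_{n+1})=\ve_j(x_n)$. The only point deserving one extra sentence is that $u'$, hence $W$, is defined only via one-sided limits at jumps of $\ve_j$; since $\ve_j(x_{n+1}^{\pm})=\ve_j(x_n^{\pm})$ by $1$-periodicity, the identity $W(x_{n+1})=W(x_n)$ holds for either consistent choice of one-sided value at the cell endpoints, which is what the transfer matrix in \eqref{eq:transferperiod} implicitly uses.
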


\begin{proof}
	This can be shown by following the procedure of transfer matrix method, as described in Section 5.1 in \cite{alexopoulos2023topologically}.
\end{proof}
 
The transfer matrix is used to propagate the solution across one unit cell. We have that
\begin{align}\label{eq:transferperiod}
    \begin{pmatrix}
        u(x_{n+1}) \\ u'(x_{n+1})
    \end{pmatrix}
    = \mathbbm{1}_{\{n\geq0\}} T_p^{(B)}
    \begin{pmatrix}
        u(x_{n}) \\ u'(x_{n})
    \end{pmatrix}
    + \mathbbm{1}_{\{n<0\}} T_p^{(A)}
    \begin{pmatrix}
        u(x_{n}) \\ u'(x_{n})
    \end{pmatrix},
\end{align}
for $n\in\mathbb{Z}$. From the symmetry of the model, we also get
\begin{align}\label{eq:symmetrycond}
    \begin{pmatrix}
        u(x_{n-1}) \\ u'(x_{n-1})
    \end{pmatrix}
    = \mathbbm{1}_{\{n\geq0\}} S T_p^{(B)} S
    \begin{pmatrix}
        u(x_{n}) \\ u'(x_{n})
    \end{pmatrix}
    + \mathbbm{1}_{\{n<0\}} S T_p^{(A)} S
    \begin{pmatrix}
        u(x_{n}) \\ u'(x_{n})
    \end{pmatrix},
\end{align}
for $n\in\mathbb{Z}$, where
\begin{align*}
    S = 
    \begin{pmatrix}
        1 & 0 \\
        0 & -1
    \end{pmatrix}.
\end{align*}
Here, let us note that $S=S^{-1}$.

Using the transfer matrix, we can easily obtain the band structure of the two periodic materials $A$ and $B$. Applying the quasiperiodic boundary conditions, we get
\begin{align*}
    \tilde{u}(x_{n+1}) = e^{i\k} \tilde{u}(x_n).
\end{align*}
Combining this with \eqref{eq:transferperiod}, we get the following problem
\begin{align}\label{eq:one before dispersion relation}
    \Big( T^{(j)}_p(\w) - e^{i\k} I \Big) \tilde{u}(x_n) = 0, \quad n\in\mathbb{N},
\end{align}
where $j=A,B$. From \eqref{eq:one before dispersion relation}, we obtain the dispersion relation for each material given by
\begin{align}\label{eq:dispersion_relation}
    \det\Big(T^{(j)}_p(\w) - e^{i\k} I\Big) = 0,\quad j=A,B.
\end{align}
The dispersion relation relates the quasiperiodicity $\k$ with the frequency $\w$ of the system and gives the structure of the spectral bands and the spectral gaps of \eqref{eq:Lj}-\eqref{eq:FB} for materials $A$ and $B$.

From Lemma \ref{lem:lambda}, we know that the eigenvalues of the transfer matrix are either both on the unit circle, or precisely one is inside the unit circle. Since the first case corresponds to a spectral band, we have the following result.
\begin{lemma}
     For $\w$ in a spectral band gap, the transfer matrix $T^{(j)}_p$ has eigenvalues denoted by $\l^{(j)}_1$ and $\l^{(j)}_2$, satisfying $|\l^{(j)}_1|<1$ and $|\l^{(j)}_2|>1$.
\end{lemma}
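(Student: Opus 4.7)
The plan is to combine two ingredients already established: the determinant identity $\det T_p^{(j)} = 1$ from Lemma~\ref{lem:lambda}, and the characterization of the spectral bands via the dispersion relation \eqref{eq:dispersion_relation}. Denote the two eigenvalues of $T_p^{(j)}(\w)$ by $\l_1^{(j)}$ and $\l_2^{(j)}$. Since $\det T_p^{(j)} = \l_1^{(j)} \l_2^{(j)} = 1$, the two eigenvalues are multiplicative inverses of one another. In particular, writing $\l_1^{(j)} = r e^{i\theta}$, we have $\l_2^{(j)} = r^{-1} e^{-i\theta}$, so either both eigenvalues lie on the unit circle (when $r = 1$) or one lies strictly inside and the other strictly outside (when $r \ne 1$). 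This gives the desired trichotomy.

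Next I would rule out the ``both on the unit circle'' possibility when $\w$ lies in a spectral band gap. By \eqref{eq:dispersion_relation}, $\w$ is in a spectral band of material $j$ if and only if there exists a real $\k\in[-\pi,\pi]$ such that $e^{i\k}$ is an eigenvalue of $T_p^{(j)}(\w)$. Since every complex number of modulus one admits a representation of the form $e^{i\k}$ for some real $\k\in[-\pi,\pi]$, this is equivalent to at least one of $\l_1^{(j)},\l_2^{(j)}$ having modulus $1$. Because of the constraint $\l_1^{(j)}\l_2^{(j)} = 1$, having one eigenvalue on the unit circle forces the other to lie on the unit circle as well. Contrapositively, if $\w$ lies in a band gap, neither eigenvalue can have modulus $1$.

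Combining the two observations, when $\w$ is in a band gap we must be in the second branch of the trichotomy: $|\l_1^{(j)}| \ne 1 \ne |\l_2^{(j)}|$ together with $|\l_1^{(j)}|\cdot|\l_2^{(j)}| = 1$ forces exactly one of the two moduli to be strictly less than $1$ and the other strictly greater. Labeling them accordingly yields the claim.

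The argument is essentially algebraic and relies entirely on results already in hand, so no serious obstacle arises. The only subtle point worth addressing carefully is that, in the damped setting, the permittivity and hence $T_p^{(j)}$ are complex-valued, so one cannot appeal to the usual real-symplectic ``trace of $T_p^{(j)}$'' characterisation of the bands that is often used in the self-adjoint case; the cleaner argument here goes directly through the equivalence ``$\w$ in a band $\Longleftrightarrow$ an eigenvalue of $T_p^{(j)}$ lies on the unit circle'', which is exactly what \eqref{eq:dispersion_relation} provides.
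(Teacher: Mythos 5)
Your argument is correct and is essentially the same as the paper's, which likewise deduces from $\det T_p^{(j)}=1$ (Lemma~\ref{lem:lambda}) that the eigenvalues are either both on the unit circle or split strictly inside/outside, and then excludes the first alternative because, by the dispersion relation \eqref{eq:dispersion_relation}, an eigenvalue of modulus one is exactly the condition for $\w$ to lie in a spectral band. Your elaboration of the band/unit-circle equivalence, including the remark that one cannot rely on the real trace criterion in the damped setting, is a faithful expansion of the paper's brief justification.
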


\subsection{Asymptotic behaviour of damped systems}\hfill

We will use the transfer-matrix method to obtain information about the asymptotic behaviour of the modes $u(x_n)$ as $n\to\pm\infty$.  Arguing in the same was as in \cite{alexopoulos2023topologically, davies2022symmetry}, we have the following results.

\begin{theorem}\label{thm:AB:spectral}
    The eigenfrequency $\w$ of a localised eigenmode of the Helmholtz problem \eqref{eq:Helmholtz} posed on a medium constituted by two different materials with damping must satisfy
    \begin{align}
        \begin{pmatrix}
            -V_{21}^{(B)}(\w) & V_{11}^{(B)}(\w)
        \end{pmatrix}
        \begin{pmatrix}
            V_{11}^{(A)}(\w) \\ -V_{21}^{(A)}(\w)
        \end{pmatrix}
        = 0,
    \end{align}
    where $(V_{11}^{(A)}(\w) , V_{21}^{(A)}(\w))^{\top}$ is the eigenvector of the transfer matrix $T_p^{(A)}$ associated to the eigenvalue $|\l_1^{(A)}|<1$ and $(V_{11}^{(B)}(\w) , V_{21}^{(B)}(\w))^{\top}$ is the eigenvector of the transfer matrix $T_p^{(B)}$ associated to the eigenvalue $|\l_1^{(B)}|<1$.
\end{theorem}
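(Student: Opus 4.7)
The plan is to combine the transfer matrix recursion \eqref{eq:transferperiod} with the inversion symmetry identity \eqref{eq:symmetrycond} to characterize when the state vector $\tilde u(x_n)$ can simultaneously decay as $n\to +\infty$ in material $B$ and as $n\to -\infty$ in material $A$. Decay in either half-line forces $\tilde u(x_0)$ to lie in a specific one-dimensional eigenspace of the relevant transfer matrix, and matching these two one-dimensional spaces produces the stated scalar condition.

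For the forward direction (material $B$), iterating \eqref{eq:transferperiod} gives $\tilde u(x_n)=(T_p^{(B)})^n\tilde u(x_0)$ for $n\ge 0$. Expanding $\tilde u(x_0)$ in the eigenbasis of $T_p^{(B)}$ and using $|\lambda_1^{(B)}|<1<|\lambda_2^{(B)}|$, exponential decay of $\tilde u(x_n)$ is equivalent to $\tilde u(x_0)$ being proportional to the eigenvector $(V_{11}^{(B)},V_{21}^{(B)})^\top$ associated with $\lambda_1^{(B)}$.

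For the backward direction (material $A$), I would first compare \eqref{eq:symmetrycond} with the inverse of \eqref{eq:transferperiod} to obtain $ST_p^{(A)}S=(T_p^{(A)})^{-1}$. Iterating this yields $\tilde u(x_{-k})=(T_p^{(A)})^{-k}\tilde u(x_0)$, so decay as $k\to\infty$ forces $\tilde u(x_0)$ to lie in the eigenspace of $(T_p^{(A)})^{-1}$ with eigenvalue $\lambda_1^{(A)}$ (which is inside the unit circle, since $\lambda_1^{(A)}\lambda_2^{(A)}=1$ by Lemma~\ref{lem:lambda}). A short conjugation check, $(ST_p^{(A)}S)(Sv)=S(T_p^{(A)}v)=\lambda Sv$ using $S^2=I$, shows that if $v$ is an eigenvector of $T_p^{(A)}$ with eigenvalue $\lambda_1^{(A)}$, then $Sv$ is an eigenvector of $(T_p^{(A)})^{-1}$ with the same eigenvalue $\lambda_1^{(A)}$. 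Therefore $\tilde u(x_0)$ must be proportional to $S(V_{11}^{(A)},V_{21}^{(A)})^\top=(V_{11}^{(A)},-V_{21}^{(A)})^\top$.

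The two constraints on $\tilde u(x_0)$ are compatible precisely when $(V_{11}^{(B)},V_{21}^{(B)})^\top$ is parallel to $(V_{11}^{(A)},-V_{21}^{(A)})^\top$. Since the row $(-V_{21}^{(B)},V_{11}^{(B)})$ annihilates the column $(V_{11}^{(B)},V_{21}^{(B)})^\top$, this parallelism condition is equivalent to the stated bilinear identity. The main obstacle I anticipate is the symmetry step: cleanly extracting $ST_p^{(A)}S=(T_p^{(A)})^{-1}$ from \eqref{eq:symmetrycond} and, critically, tracking which eigenvalue ($\lambda_1^{(A)}$ versus $\lambda_2^{(A)}$) governs the decaying half-space solution after the $S$-conjugation; once this bookkeeping is settled, the remaining content is routine $2\times 2$ linear algebra.
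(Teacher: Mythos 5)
Your argument is correct and is precisely the transfer-matrix eigenvector-matching argument that the paper sets up via \eqref{eq:transferperiod}, \eqref{eq:symmetrycond} and Lemma~\ref{lem:lambda} and then delegates to the cited references: decay for $n\to+\infty$ pins $\tilde u(x_0)$ to the $\lambda_1^{(B)}$-eigenvector of $T_p^{(B)}$, the identity $ST_p^{(A)}S=(T_p^{(A)})^{-1}$ shows decay for $n\to-\infty$ pins it to $S(V_{11}^{(A)},V_{21}^{(A)})^\top$, and the vanishing of the $2\times2$ determinant expressing parallelism is exactly the stated bilinear condition. Your eigenvalue bookkeeping on the $A$ side (that $SV^{(A)}$ is the eigenvector of $(T_p^{(A)})^{-1}$ with eigenvalue $\lambda_1^{(A)}$, of modulus less than one) is the right resolution of the point you flagged, and it is consistent with the decay rate $|\lambda_1^{(A)}|^{|n|}$ asserted in Corollary~\ref{cor:asymptotic behaviour}.
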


From this, we obtain the asymptotic behaviour of the localised eigenmodes as $n\to\pm\infty$.

\begin{corollary}\label{cor:asymptotic behaviour}
    A localised eigenmode $u$ of \eqref{eq:Helmholtz}, posed on a medium constituted by two semi-infinite arrays of different materials with damping, and its associated eigenfrequency $\w$ must satisfy 
    \begin{align*}
        u(x_n) = O\Big( |\l^{(A)}_1(\w)|^{|n|} \Big) \ \ \text{ and } \ \ u'(x_n) = O\Big( |\l^{(A)}_1(\w)|^{|n|} \Big) \ \ \text{ as } n\to-\infty,
    \end{align*}
    \begin{align*}
        u(x_n) = O\Big( |\l^{(B)}_1(\w)|^{|n|} \Big) \ \ \text{ and } \ \ u'(x_n) = O\Big( |\l^{(B)}_1(\w)|^{|n|} \Big) \ \ \text{ as } n\to+\infty,
    \end{align*}
    where $\l^{(j)}_1$, for $j=A,B$, are the eigenvalues of $T^{(j)}_p$ satisfying $|\l^{(j)}_1(\w)|<1$.
\end{corollary}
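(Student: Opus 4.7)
The plan is to derive both bounds directly by iterating the transfer-matrix recursion \eqref{eq:transferperiod}, using the spectral characterisation of the interface vector $\tilde{u}(x_0)$ that is implicit in Theorem~\ref{thm:AB:spectral}. The existence of a localised mode forces $\tilde{u}(x_0)$ to lie in a specific one-dimensional eigenspace on each side of the interface, and iterating the corresponding eigenvalue yields the decay rate.

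For $n\geq 0$, iterating \eqref{eq:transferperiod} gives $\tilde{u}(x_n) = (T_p^{(B)})^n\,\tilde{u}(x_0)$. Decomposing $\tilde{u}(x_0)$ in the eigenbasis of $T_p^{(B)}$, any component along the eigenvector of $\lambda_2^{(B)}$ (with $|\lambda_2^{(B)}|>1$) would grow exponentially in $n$, contradicting localisation. Hence $\tilde{u}(x_0)$ is proportional to $(V_{11}^{(B)},V_{21}^{(B)})^\top$, the eigenvector for $\lambda_1^{(B)}$, so $\tilde{u}(x_n) = (\lambda_1^{(B)})^n\,\tilde{u}(x_0)$, which gives the claimed $O(|\lambda_1^{(B)}|^{|n|})$ bound on both $u(x_n)$ and $u'(x_n)$ as $n\to+\infty$.

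For $n<0$, I would first combine \eqref{eq:symmetrycond} with \eqref{eq:transferperiod} to deduce the identity $S\,T_p^{(A)}\,S = (T_p^{(A)})^{-1}$, using $S=S^{-1}$. Under this symmetry, the $\lambda_1^{(A)}$-eigenvector $(V_{11}^{(A)},V_{21}^{(A)})^\top$ is mapped to $(V_{11}^{(A)},-V_{21}^{(A)})^\top$, which is therefore an eigenvector of $T_p^{(A)}$ with reciprocal eigenvalue $\lambda_2^{(A)}$. Localisation as $n\to-\infty$ forces $\tilde{u}(x_0)$ to lie in this $\lambda_2^{(A)}$-eigenspace (otherwise the $\lambda_1^{(A)}$-component blows up when we iterate $(T_p^{(A)})^{-1}$), so $\tilde{u}(x_n)=(\lambda_2^{(A)})^n\,\tilde{u}(x_0)$ for $n<0$. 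Lemma~\ref{lem:lambda} gives $\det T_p^{(A)}=1$, hence $\lambda_1^{(A)}\lambda_2^{(A)}=1$ and $|\lambda_2^{(A)}|^n = |\lambda_1^{(A)}|^{|n|}$ for $n<0$, producing precisely the stated bound.

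The main subtlety is orientational: since $T_p^{(A)}$ propagates toward the interface, the decay as $n\to-\infty$ is controlled by the expanding eigenvalue $\lambda_2^{(A)}$ of the forward transfer matrix, not the contracting one. The inversion-symmetry identity $S T_p^{(A)} S=(T_p^{(A)})^{-1}$ is what allows one to re-express the decay rate in terms of $\lambda_1^{(A)}$ as stated in the corollary, and simultaneously reconciles this with the form of the $A$-side eigenvector used in Theorem~\ref{thm:AB:spectral}. Once this book-keeping is in place, the rest of the argument is a direct iteration.
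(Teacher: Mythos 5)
Your argument is correct and follows the same transfer-matrix route that the paper sets up in Section~\ref{sec:Asymptotic behaviour} and then delegates to the cited references: iterate \eqref{eq:transferperiod} on each side, note that localisation forces $\tilde{u}(x_0)$ into the contracting eigenspace of $T_p^{(B)}$ and the expanding eigenspace of $T_p^{(A)}$, and use $\det T_p^{(A)}=1$ (Lemma~\ref{lem:lambda}) to convert $|\l_2^{(A)}|^{n}$ into $|\l_1^{(A)}|^{|n|}$ for $n<0$. Your careful handling of the orientation on the $A$-side, via $S T_p^{(A)} S=(T_p^{(A)})^{-1}$, is exactly the book-keeping needed to reconcile the stated decay rate with the eigenvector $(V_{11}^{(A)},-V_{21}^{(A)})^{\top}$ appearing in Theorem~\ref{thm:AB:spectral}.
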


\section{Damping increments and large damping}\label{sec:Large damping}
In order for Theorem \ref{thm:CC} to hold, the magnitude of the damping must be sufficiently small so that   \eqref{eq:Rouche} holds. In fact, this bound is deeply rooted in the structure of the material and its parameters. 
For structures with larger damping, one might consider a sequence of structures with damping increments chosen so that \eqref{eq:Rouche} holds in each successive step. In order to choose the increments, we must look at \eqref{eq:Rouche}, where the impedances $Z^{(D)}$ and $Z^{(U)}$ appear. From the definition of $Z$ in \eqref{def:surface impedances}, we see the dependence on the localised mode $u$. Here, equation \eqref{eq:transferperiod} gives us the continuous dependence of $u$ on the material parameters and hence, on the damping. This makes choosing an appropriate damping increment a complicated task, as different materials will have a different effect on this choice.


Here, let us also note that Theorem \ref{thm:CC} can also be adapted to the setting of two dampings. Let us consider two permittivity functions $\ve^{(D_1)}$ and $\ve^{(D_2)}$, such that 
\begin{align}\label{two dampings limit}
	\lim_{\d\to0} \mathrm{Im}\Big( \ve^{(D_2)} \Big)(\d) = \mathrm{Im}\Big( \ve^{(D_1)} \Big),
\end{align}
and let us denote by $Z^{(D_1)}, \mathcal{L}^{(D_1)}$ and $Z^{(D_2)},\mathcal{L}^{(D_2)}$ the associated interface impedance functions and differential operators, respectively. Then, we have the following corollary of Theorem \ref{thm:CC}.

\begin{corollary}\label{cor:two dampings}
    Let $\mathfrak{A}^{D_1}$ be a band gap of $\mathcal{L}^{(D_1)}$ which contains a root of $Z^{(D_1)}$. Then, for small $\d$, $Z^{(D_2)}$ has a root, which converges to the one of $Z^{(D_1)}$ as $\d\to 0$.
\end{corollary}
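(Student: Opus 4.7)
The plan is to mirror the proof of Theorem \ref{thm:CC}, with $Z^{(D_1)}$ playing the role previously played by $Z^{(U)}$ and $Z^{(D_2)}$ playing the role of the perturbed impedance $Z^{(D)}$. Let $\w_{D_1}\in\mathfrak{A}^{D_1}$ denote the given root of $Z^{(D_1)}$. The three ingredients that fed into Rouch\'e's theorem in the original proof, namely holomorphicity on a common open set, pointwise convergence of the perturbed impedance, and nonvanishing of the base impedance on a small circle around $\w_{D_1}$, all admit natural counterparts in this setting.

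First, I would verify that both $Z^{(D_1)}$ and $Z^{(D_2)}$ are holomorphic on the interior of a common open set containing $\w_{D_1}$. Corollary \ref{cor:Z_3 hol} applied directly to $\mathcal{L}^{(D_1)}$ yields that $Z^{(D_1)}$ is holomorphic in $(\mathfrak{A}^{D_1})^\circ$. Since \eqref{two dampings limit} ensures that the coefficients of $\mathcal{L}^{(D_2)}$ converge to those of $\mathcal{L}^{(D_1)}$, the spectral bands of the two operators also converge (in the Hausdorff sense on compact subsets of $\C$), so that for small enough $\d$ any compact neighbourhood of $\w_{D_1}$ contained in $(\mathfrak{A}^{D_1})^\circ$ is also contained in $(\mathfrak{A}^{D_2})^\circ$. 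Applying Corollary \ref{cor:Z_3 hol} to $\mathcal{L}^{(D_2)}$ then gives holomorphicity of $Z^{(D_2)}$ on this common neighbourhood.

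Second, I would establish the pointwise (indeed locally uniform) convergence
\[
\lim_{\d\to 0}\,Z^{(D_2)}(\w) \;=\; Z^{(D_1)}(\w),
\]
which follows from the continuous dependence of the decaying half-line solutions $u_A,u_B$ on the permittivity, exactly as in the derivation of \eqref{limit Z3 to Z2}. Then I would pick $\rho>0$ small enough that $\overline{N_\rho}\subset (\mathfrak{A}^{D_1})^\circ$, where $N_\rho=\{\w:|\w-\w_{D_1}|<\rho\}$, and such that $Z^{(D_1)}$ has no other zeros in $\overline{N_\rho}$; this is possible because the zeros of the holomorphic function $Z^{(D_1)}$ are isolated. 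In particular $\inf_{\w\in\partial N_\rho}|Z^{(D_1)}(\w)|>0$. Combined with the uniform convergence on $\partial N_\rho$, this yields $|Z^{(D_2)}(\w)-Z^{(D_1)}(\w)|<|Z^{(D_1)}(\w)|$ on $\partial N_\rho$ for all sufficiently small $\d$. Rouch\'e's theorem then gives that $Z^{(D_2)}$ has exactly as many zeros in $N_\rho$ as $Z^{(D_1)}$, namely one; letting $\rho\to 0$ produces a root $\w_{D_2}\to\w_{D_1}$ as $\d\to 0$.

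The main obstacle is the step that upgrades the pointwise convergence \eqref{limit Z3 to Z2}-type statement to the uniform bound on $\partial N_\rho$ used in Rouch\'e's estimate. However, because the decaying solutions $u_A,u_B$ are obtained from an ODE whose coefficients depend jointly holomorphically on $\w$ and continuously on $\d$, standard ODE perturbation estimates deliver continuous dependence of $Z^{(D_2)}$ on $(\w,\d)$ on any compact set avoiding the Floquet--Bloch spectrum, which is precisely what is needed. No step requires genuinely new ideas beyond those already deployed in the proof of Theorem \ref{thm:CC}; the corollary is in essence a ``base-point shift'' of that argument from $\d=0$ to $\d=0$ in the two-parameter family interpolating between $\varepsilon^{(D_1)}$ and $\varepsilon^{(D_2)}$.
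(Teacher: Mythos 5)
Your proposal is correct and follows essentially the same route as the paper's own proof in Appendix~\ref{app:cor:two dampings}: a Rouch\'e argument on a small disc $N_\rho$ around the root of $Z^{(D_1)}$, with holomorphicity supplied by Corollary~\ref{cor:Z_3 hol} and the boundary inequality supplied by the convergence of $Z^{(D_2)}$ to $Z^{(D_1)}$ as $\delta\to 0$. Your additional care about uniform convergence on $\partial N_\rho$ and the isolation of the zero only makes explicit what the paper leaves implicit.
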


\begin{proof}
    The proof is almost identical to Theorem \ref{thm:CC} and is given in Appendix \ref{app:cor:two dampings}.
\end{proof}

It is evident that the issue of the choice on the damping increment also applies on the case of Corollary \ref{cor:two dampings}. We show this numerically by considering an example of a mirror symmetric material, as described in Section \ref{sec:Numerical example} and depicted in Figure \ref{fig:config}. We define the regions $R_c$ and $R_w$ to be the regions on which the inequality of Rouch\'e's theorem, \emph{i.e.} \eqref{eq:Rouche} and \eqref{eq:Rouche for dampings}, can and cannot be applied, respectively. More precisely,
\begin{align}
    R_c := \Bigg\{ \w\in\mathfrak{A}^{D_1}: \quad \Big|Z^{(D_2)}(\w)\Big| > \Big|Z^{(D_2)}(\w)-Z^{(D_1)}(\w)\Big| \Bigg\}
\end{align}
and
\begin{align}
    R_w := \Bigg\{ \w\in\mathfrak{A}^{D_1}: \quad \Big|Z^{(D_2)}(\w)\Big| \leq \Big|Z^{(D_2)}(\w)-Z^{(D_1)}(\w)\Big| \Bigg\}.
\end{align}

In Figure \ref{fig:Rouche}, we have two cases with fixed dampings close to each other, and we observe how the roots of the interface impedance are placed with respect to the regions $R_c$ and $R_w$. In particular, let $\w_1$ denote the root of $Z^{(D_1)}$ and $\w_2$ denote the associated root of $Z^{(D_2)}$. Then, we see that for $\w=\w_1$, we are at the equality on Rouch\'e's inequality, which implies that $\w_1\in\partial R_{w}$, as shown in Figure \ref{fig:Rouche}. Then, for $\w=\w_2$, it is clear that the inequality \eqref{eq:Rouche for dampings} no-longer holds, and $\w_2\in R_{w}^{\circ}$. In both cases, we are able to enclose the two roots $\omega_1$ and $\omega_2$ by a curve entirely lying in the region $R_c$ where the inequality holds.

\begin{figure}[tbh]
    \begin{subfigure}[b]{0.5\linewidth}
        \includegraphics[width=\textwidth]{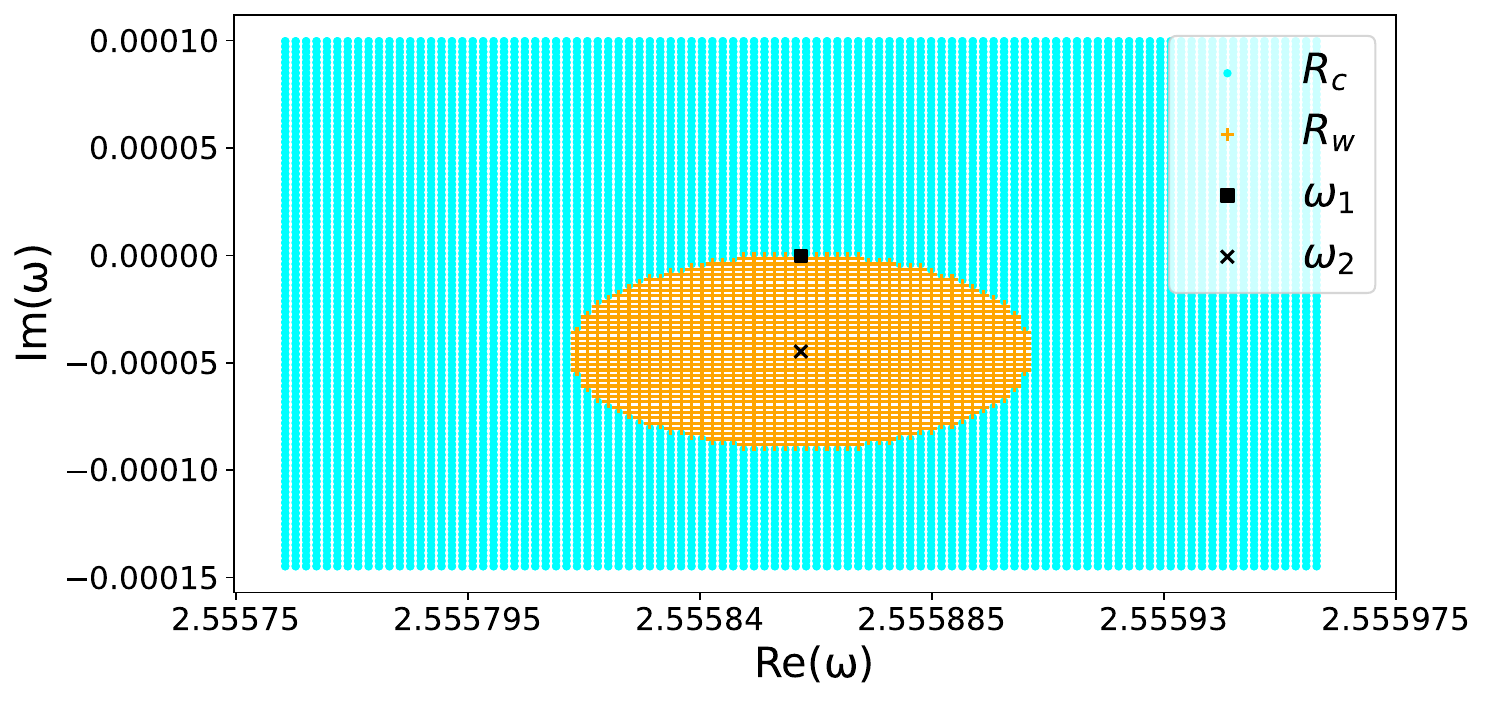}
        \caption{Dampings $0$ and $0.00005$.}
    \end{subfigure}\hfill
    \begin{subfigure}[b]{0.5\linewidth}
        \includegraphics[width=\textwidth]{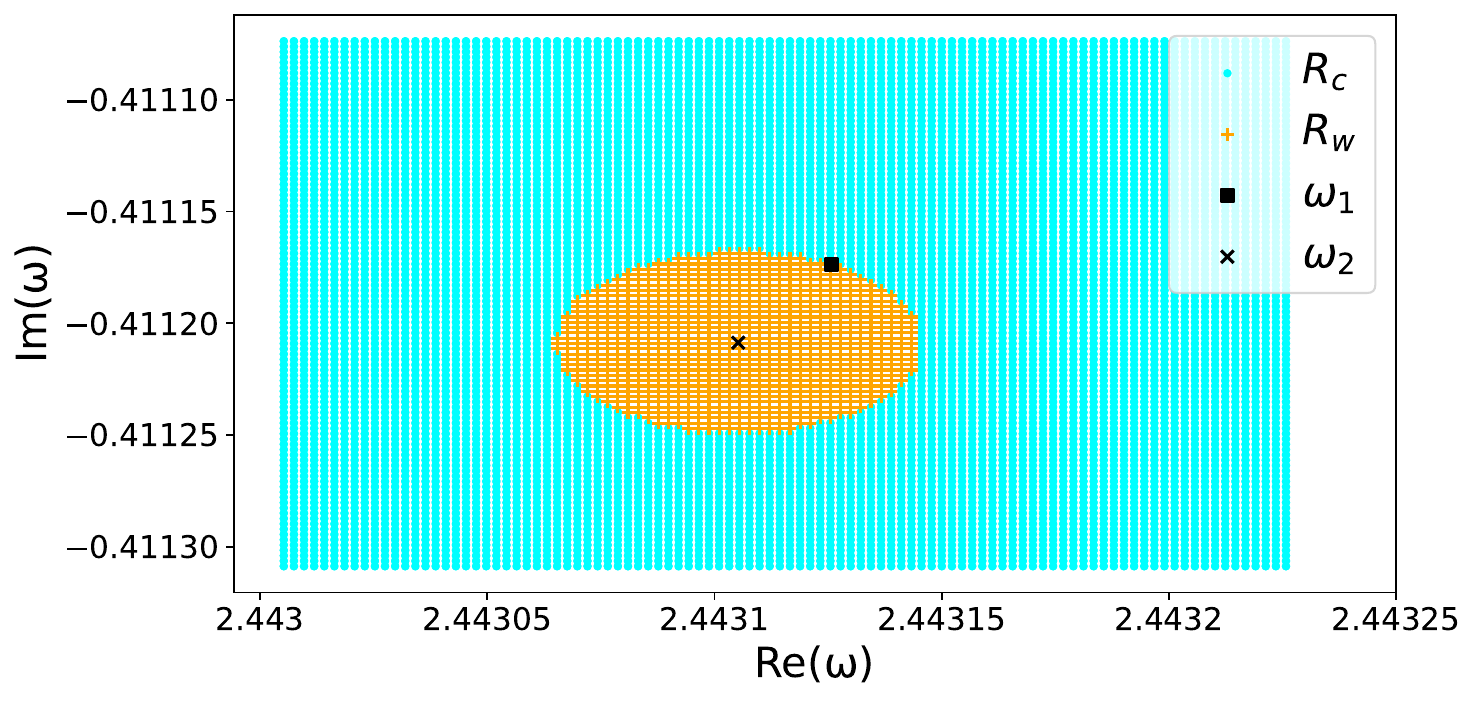}
        \caption{Dampings $0.5$ and $0.50005$.}
    \end{subfigure}
\caption{Numerically computed regions $R_c$ and $R_w$, where the inequality at the heart of Rouch\'e's theorem does and does not apply, respectively. The roots of $Z^{(D_1)}$ and $Z^{(D_2)}$ are shown. The root $\w_1$ of $Z^{(D_1)}$ (which has the smaller damping) must always lie on the interface between the two sets (\emph{i.e.} $\w_1\in \partial R_w$), while $\w_2\in R_w$. In these cases, both $\w_1$ and $\w_2$ can be enclosed by a closed curve entirely lying in $R_c$, meaning that Rouch\'e's theorem can be applied.} 
\label{fig:Rouche}
\end{figure}

\section{Numerical examples} \label{sec:Numerical example}

We now consider an example of a damped system with piecewise constant $\ve$, which can be thought of as a layered medium. In this case, we directly compute the interface modes of the system and illustrate our main results.

\subsection{Configuration}\hfill

Let us consider two materials $A$ and $B$, each one in the form of a semi-infinite array, glued at the interface $x_0$. Each unit cell is the product of layering; it contains three particles of two different permittivities $\ve_1$ and $\ve_2$. Each permittivity function satisfies the assumptions of Section \ref{subsection:MS:Damped systems}. We denote by $D^{[1]}_i$, $i=1,2$ and by $D^{[2]}_j$, $j=1,2,$ the particles of the unit cell with permittivity $\ve_1$ and $\ve_2$, respectively. In particular, for the mirror symmetry to be valid, the particles obey the following ordering:
\begin{align*}
    D^{[1]}_1 \ - \ D^{[2]}_1 \ - \ D^{[1]}_2 \ \text{ for material A}
\end{align*}
and
\begin{align*}
    D^{[2]}_1 \ - \ D^{[1]}_1 \ - \ D^{[2]}_2 \ \text{ for material B}.
\end{align*}
A schematic depiction of this configuration is given in Figure \ref{fig:config}.

\begin{figure}
    \centering
    \begin{tikzpicture}
        \node[inner sep=0pt] (russell) at (-4.5,6.5) {\includegraphics[width=0.3\textwidth]{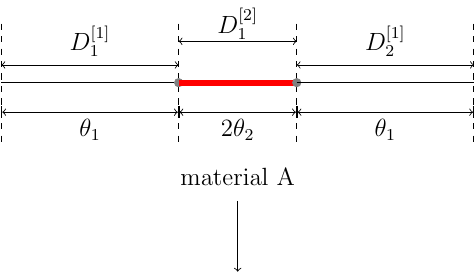}};
        \node[inner sep=0pt] (russell) at (4.5,6.5) {\includegraphics[width=0.3\textwidth]{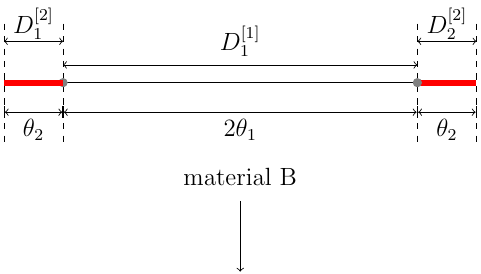}};
        \node[inner sep=0pt] (russell) at (0,4) {\includegraphics[width=1.0\textwidth]{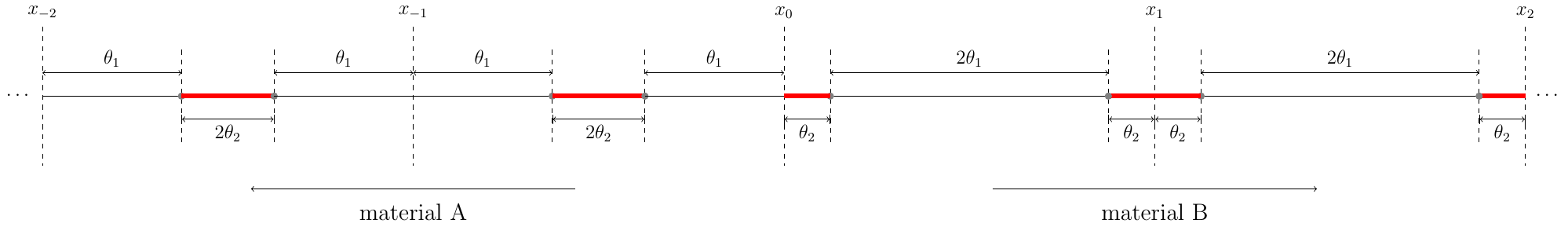}};
    \end{tikzpicture}
    \caption{An example of a damped system. In the unit cell of each material, we have 3 particles. We notice the mirror-symmetric way in which the particles are placed inside the periodic cells. Each material is constituted by a semi-infinite array created by periodically repeating the unit cell. Our structure is the result of gluing materials $A$ and $B$ at the interface $x_0$.} 
    \label{fig:config}
\end{figure}

In order to simplify our system, we will assume that the permittivities $\ve_1$ and $\ve_2$ are complex valued and constant on each particle. This translates to
\begin{align*}
    \ve(x) = 
    \begin{cases}
        \ve_1, & x \in D^{[1]} := \bigcup_{i=1}^{2} D^{[1]}_i,\\
        \ve_2, & x \in D^{[2]} := \bigcup_{i=1}^2 D^{[2]}_i,
    \end{cases}
\end{align*}
with $\ve_1,\ve_2\in\mathbb{C}$.
We consider an arbitrary parameter $\d>0$ and we assume that
\begin{align*}
    \mathrm{Im}(\ve_1) = c_1 \d \quad \text{ and } \quad \mathrm{Im}(\ve_2) = c_2\d, \quad c_1,c_2\in\mathbb{R}_{>0}.
\end{align*}
This allows us to vary the damping of the system by taking different values of $\d$.

\subsection{Band characterization}\hfill

The quasiperiodic Helmholtz problem \eqref{eq:Lj}-\eqref{eq:FB}, for each material, is characterised by a dispersion relation, as found in \eqref{eq:dispersion_relation}. This is an expression which relates the quasiperiodicity $\k$ with the frequency $\w$, such that it describes the spectral bands $\omega=\omega_n(\k)$. It is an equation of the form:
\begin{align}\label{eq:disp_rel}
    2\cos(\k) = f(\w),
\end{align}
where $f:\mathbb{R}\rightarrow\mathbb{C}$ is a function that depends on the material parameters and the system's geometry. This function is variously known as the discriminant or Lyapunov function of the operator $\mathcal{L}_j$ \cite{kuchment2016overview}. Some examples are provided in \emph{e.g.} \cite{alexopoulos2023effect, morini2018waves}.

From \eqref{eq:disp_rel}, we observe that, in order to be in a spectral band, the following two conditions need to be satisfied:
\begin{align}\label{band conditions}
    \mathrm{Im}(f(\w)) = 0 \quad \text{ and } \quad |f(\w)| < 2.
\end{align}

We will use these conditions to identify the spectral bands and the band gaps of our quasiperiodic Helmholtz problem, \emph{i.e.}:
\begin{itemize}
    \item Band: both condition in \eqref{band conditions} are satisfied.
    \item Band gap: at least one of the conditions in \eqref{band conditions} is not satisfied. 
\end{itemize}

\subsection{Effect of damping on spectrum} \hfill

A key feature of the proof given in Sections $\ref{sec:Undamped}$ and \ref{sec:Damped} is that, when we consider $\d=0$, \emph{i.e.}
\begin{align*}
    \mathrm{Im}(\ve_1) = \mathrm{Im}(\ve_2) = 0,
\end{align*}
we know that there exists a root $\w_U$ of $Z$ in the band gap, on the real axis.

In Figure \ref{fig:root localisation}, we observe how adding damping to the system affects the structure of the spectrum of \eqref{eq:Helmholtz}. By fixing the material parameters and increasing the damping of the system, \emph{i.e.} increasing $\d$, we observe a downwards shit of the spectral bands. In addition, we see how the location of the root of the interface impedance changes as the damping increases.

It is worth noting that Rouch\'e's theorem holds for small $\d$. In fact, as $\d$ increases, the inequality condition of the theorem fails and hence we cannot use this tool. Nevertheless, from the numerics, we observe that even for larger values of the damping, a root for $Z$ still exists in the corresponding band gap. Along with this, in Figure \ref{fig:root localisation}, we provide also three examples of fixed dampings which show how the spectral bands and the associated gaps look like. We see where the root of the interface impedance function $Z$ is located in each spectral gap. The key feature is that the edge mode shown to exist in the undamped case in \cite{coutant2024surface, thiang2023bulk, alexopoulos2023topologically} persists in the damped case.

\begin{figure}[tbh]
    \begin{subfigure}[t]{0.5\linewidth}
        \includegraphics[trim={0.2cm 0 1cm 0},clip,width=\textwidth]{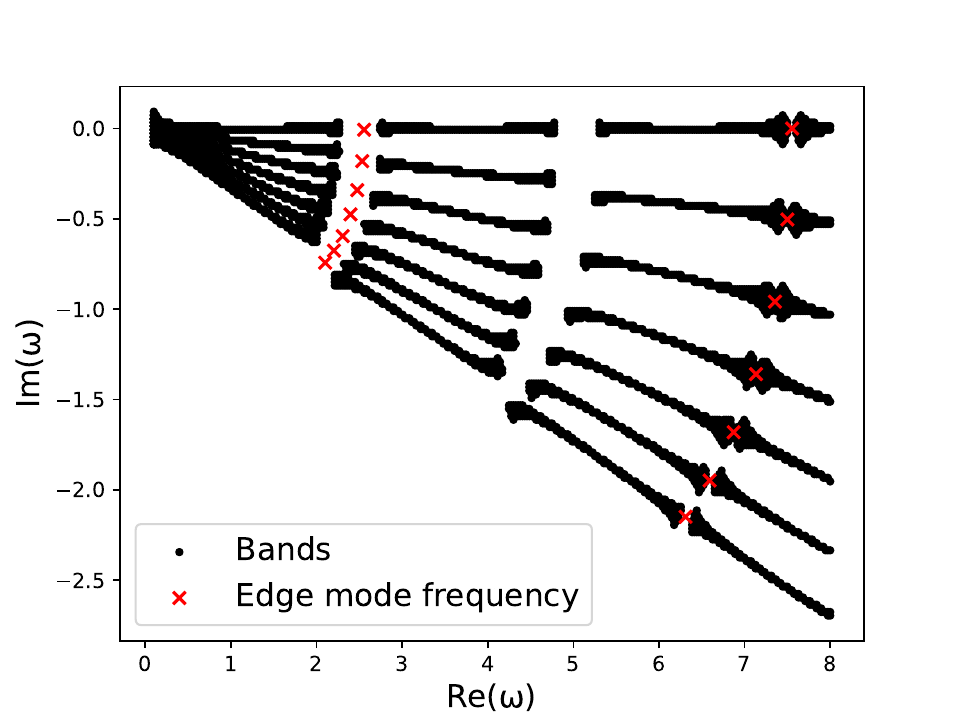}
        \caption{Band structure for varying damping.}
    \end{subfigure}\hfill
    \begin{subfigure}[t]{0.5\linewidth}
        \includegraphics[trim={0.5cm 0 1.5cm 0},clip,width=\textwidth]{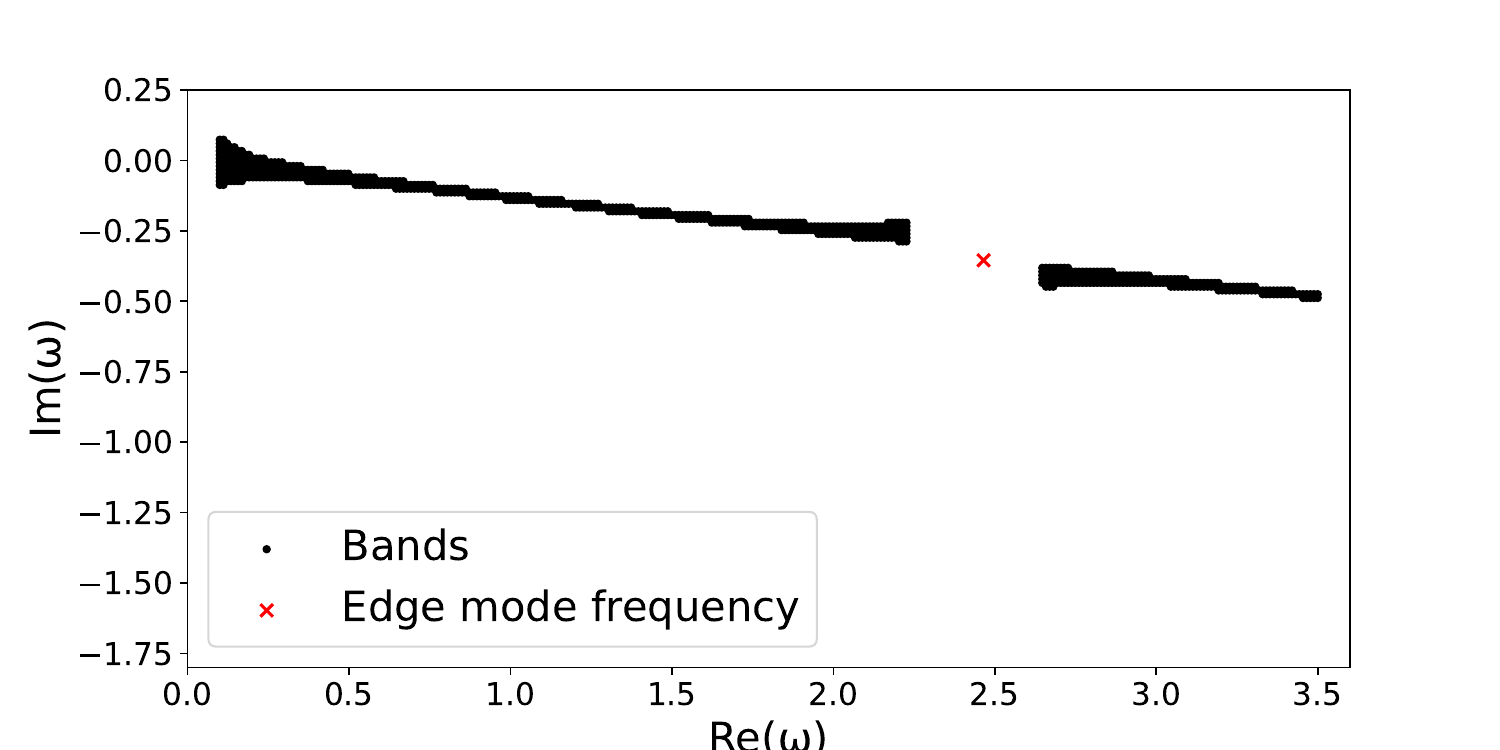}
        \caption{Band structure for damping 0.429.}
    \end{subfigure}\hfill
    \begin{subfigure}[t]{0.5\linewidth}
        \includegraphics[trim={0.5cm 0 1.5cm 0},clip,width=\textwidth]{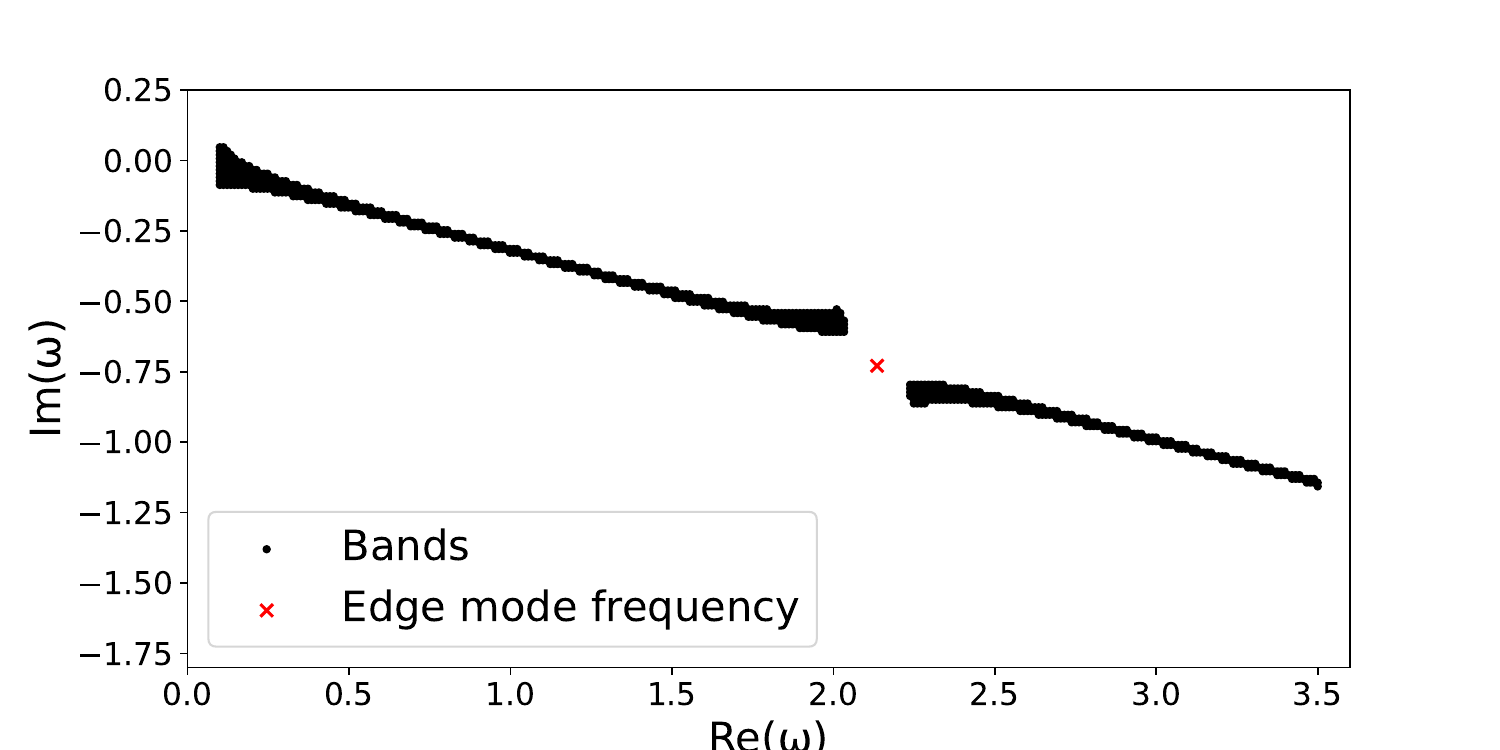}
        \caption{Band structure for damping 1.143.}
    \end{subfigure}\hfill
    \begin{subfigure}[t]{0.5\linewidth}
        \includegraphics[trim={0.5cm 0 1.5cm 0},clip,width=\textwidth]{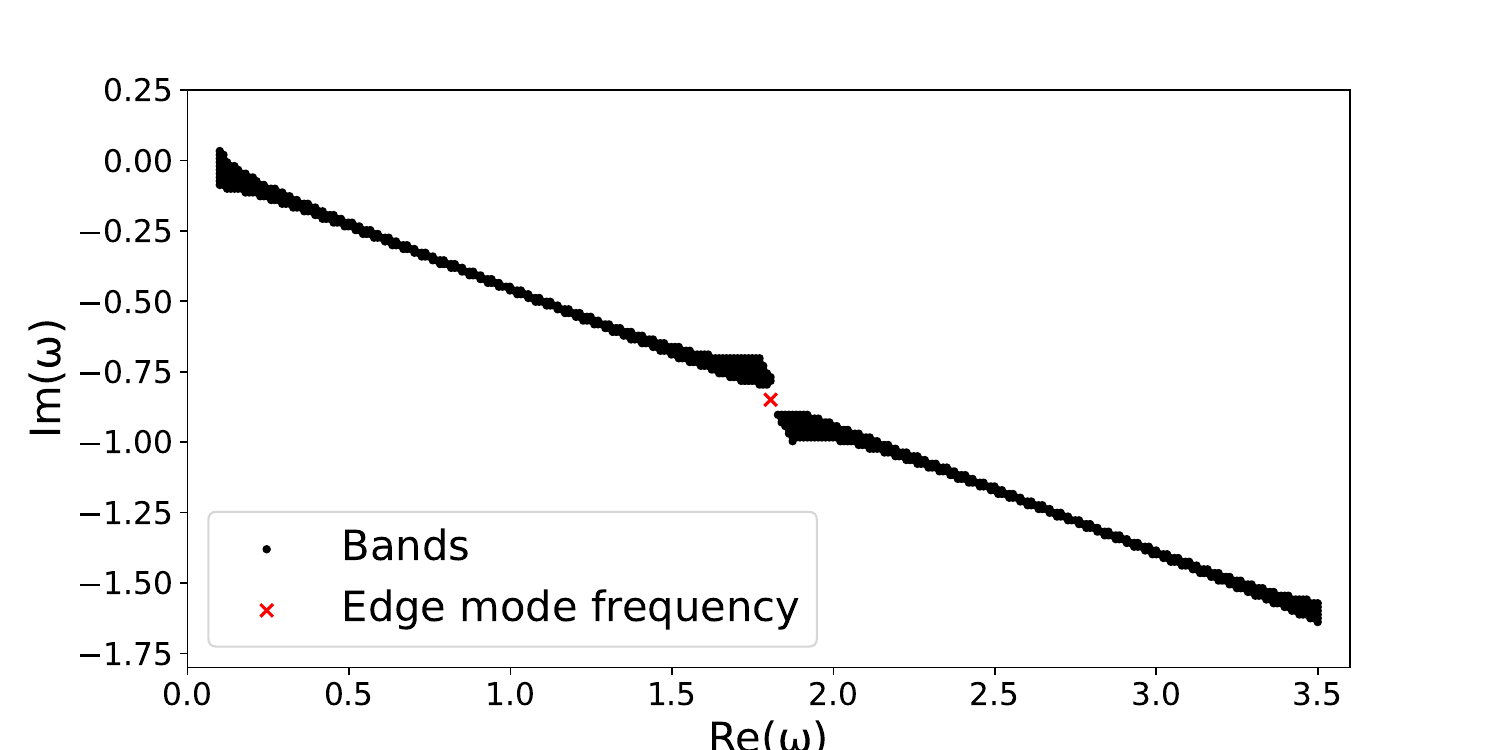}
        \caption{Band structure for damping 1.186.}
    \end{subfigure}
\caption{Spectral bands of the damped system studied in Section \ref{sec:Numerical example}. We observe how the increase in damping pushes the bands further away from the real axis. Also, we notice how the position of the roots of the interface impedance function $Z^{(D)}$ in a spectral gap changes as the damping changes. For three different and fixed values of damping, we provide the exact structure of the spectral bands.} 
\label{fig:root localisation}
\end{figure}

\subsection{Wave localisation} \hfill

Let us now  fix the damping and consider the edge state of the interface material. We take $\d>0$, so that $\mathrm{Im}(\ve_1),\mathrm{Im}(\ve_2)>0$. For this specific damping, we consider a frequency in a spectral band gap of \eqref{eq:Lj}-\eqref{eq:FB} for which the interface impedance function is zero, \emph{i.e.}, $Z^{(D)}(\w)=0$.

In Figure \ref{fig:eigenmode}, we observe the decaying character of $|u|$ as $|x|\to\infty$. The red lines denote the positions of the endpoints $x_n$'s. We see how $u$ oscillates at the scale of the unit cell, while its magnitude is localised around the interface $x_0$ and decays away from it. The blue curves denote the eigenvalue envelope given by Theorem \ref{cor:asymptotic behaviour}. In particular, denoting the eigenvalue envelope by $F$, we have:
\begin{align*}
    F(x_n) :=
    \left\{
    \begin{aligned}
        |\l^{(A)}_1(\w)|^{|n|}, &\quad n<0,\\
        |\l^{(B)}_1(\w)|^{|n|}, &\quad n>0,
    \end{aligned}
    \right.
\end{align*}
where $\l^{(A)}_1$ is the eigenvalue of $T^{(A)}_p$ satisfying $|\l^{(A)}_1(\w)|<1$ and $\l^{(B)}_1$ is the eigenvalue of $T^{(B)}_p$ satisfying $|\l^{(B)}_1(\w)|<1$.
We observe how the eigenmode follows the decay rate of the eigenvalue envelope as $|x|\to\infty$. This is the expected behaviour indicated by Theorem \ref{cor:asymptotic behaviour}.




\begin{figure}
\begin{center}
\includegraphics[width=0.8\textwidth]{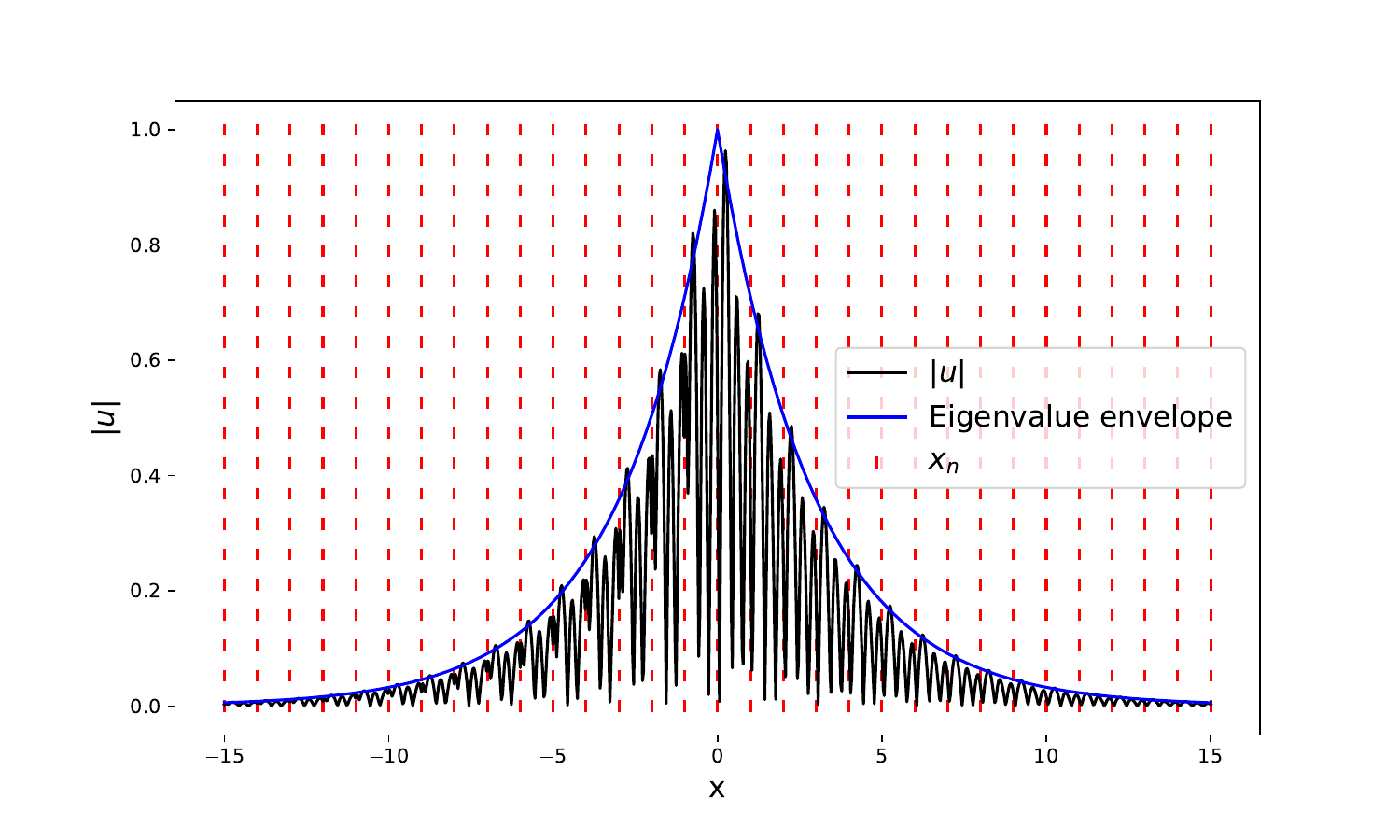}
\end{center}
\caption{For fixed damping, we see the behaviour of an interface localised mode as $|x|\rightarrow\infty$, for the damped system considered in Section \ref{sec:Numerical example}. The eigenvalue envelope shows the decay rates in terms of the eigenvalues of the transfer matrices, as described in Theorem \ref{cor:asymptotic behaviour}.} 
\label{fig:eigenmode}
\end{figure}

\section{Conclusion}
We have shown that localised interface modes exist for materials with damping. Specifically, we have shown that the interface modes previously studied for undamped materials persist when the imaginary parts of the  material parameters are non-zero. Viewing the damped system as a perturbation of the undamped one and using Rouch\'e's theorem, we are able to show the existence of localised modes for the damped model. These localised modes are unique in the sense that they correspond uniquely to the localised modes in the undamped system (and they coalesce in the limit of small damping). We have computed the edge mode frequency numerically and found that the edge mode exists even in the case of relatively large dampings. Finally, we adapted the transfer matrix method to the case of damped systems and obtained the asymptotic behaviour of the localised modes, as $|x| \rightarrow \infty$, in terms of the eigenvalues of the transfer matrix of each material. Since the quantisation of the bulk index is broken by the damping, proving  topological robustness of the herein studied interface modes remains an open problem.

\section*{Conflicts of interest}

The authors have no conflicts of interest to disclose.

\section*{Acknowledgements}

The work of KA was supported by ETH Z\"urich under the project ETH-34 20-2.

\appendix

\section{Self-adjointness in Lemma~\ref{lemma:Z_2 no poles}} \label{app:boundary pb}

We consider two functions $u, v$ satisfying the boundary conditions at infinity as described in \eqref{eq:boundaryvalueproblem}. We will show that
\begin{align*}
    \langle \mathcal{L}_A u, v \rangle = \langle u, \mathcal{L}_A v \rangle,
\end{align*}
where the inner product $\langle\cdot,\cdot\rangle$ is defined in \eqref{def:inner product}.

It holds
\begin{align*}
    \langle \mathcal{L}_A u, v \rangle &= \int_{-\infty}^{x_0} \frac{1}{\m_0} \frac{\partial}{\partial x}\left( \frac{1}{\ve(x)} \frac{\partial}{\partial x} u(x) \right) \overline{v}(x) \upd x \\
    &= \left[ \frac{1}{\m_0} \frac{1}{\ve(x)} \frac{\partial}{\partial x} u(x) \overline{v}(x) \right]_{-\infty}^{x_0} - \int_{-\infty}^{x_0} \frac{1}{\m_0} \frac{\partial}{\partial x} u(x) \frac{1}{\ve(x)} \frac{\partial}{\partial x} \overline{v}(x) \upd x\\
    &= - \int_{-\infty}^{x_0} \frac{1}{\m_0} \frac{\partial}{\partial x} u(x) \frac{1}{\ve(x)} \frac{\partial}{\partial x} \overline{v}(x) \upd x,
\end{align*}
where the last step follows since $\frac{\partial}{\partial x}u(x_0^{-}) = 0$ and $\frac{\partial}{\partial x}u(x) \to 0$ as $x\to-\infty$. Then,
\begin{align*}
    \langle \mathcal{L}_A u, v \rangle &= - \left[ \frac{1}{\m_0}  u(x) \frac{1}{\ve(x)} \frac{\partial}{\partial x} \overline{v}(x) \right]_{-\infty}^{x_0} +\int_{-\infty}^{x_0} u(x) \frac{1}{\m_0} \frac{\partial}{\partial x} \left( \frac{1}{\ve(x)} \frac{\partial}{\partial x} \overline{v}(x) \right) \upd x \\
    &= \int_{-\infty}^{x_0} u(x) \frac{1}{\m_0} \frac{\partial}{\partial x} \left( \frac{1}{\ve(x)} \frac{\partial}{\partial x} \overline{v}(x) \right) \upd x\\
    &= \langle u, \mathcal{L}_A v \rangle,
\end{align*}
since $u(x) \to 0$ as $x\to-\infty$ and $\frac{\partial}{\partial x}v(x_0^-) = 0$. This concludes the proof.

\section{Proof of Corollary~\ref{cor:two dampings}}\label{app:cor:two dampings}

\begin{proof}

Since we are in the case of complex frequencies, the spectral band gaps of the operators $\mathcal{L}$ are defined as the complement of the spectral bands. This implies that $\mathfrak{A}^{D_1} \cap \mathfrak{A}^{D_2} \ne \emptyset$. In addition, let us assume that there exists $\w_1\in\mathfrak{A}^{D_1}\cap\mathfrak{A}^{D_2}$ such that $Z^{(D_1)}(\w_1)=0$. Now, let us define the set $N_\r$ to be
    \begin{align}\label{def:N_r two dampings}
        N_{\r} := \Big\{ \w\in\mathfrak{A}^{D_1}\cap\mathfrak{A}^{D_2}: \ |\w-\w_1|<\r \Big\} \subset \Big(\mathfrak{A}^{D_1}\cap\mathfrak{A}^{D_2}\Big)^\circ,
    \end{align}
    such that for any $\f>0$ there exists $\r>0$ such that
    \begin{align*}
        \Big| Z^{(D_1)}(\w) \Big| < \f,
    \end{align*}
    for all $\w\in N_{\r}$. From  Corollary~\ref{cor:Z_3 hol}, we get that
    \begin{center}
        $Z^{(D_1)}$ and $Z^{(D_2)}$ are holomorphic in $N_{\r}$.
    \end{center}
    In addition, from \eqref{two dampings limit}, we know that
    \begin{align*}
        \lim_{\d\rightarrow0}\Big|Z^{(D_1)}(\w)-Z^{(D_2)}(\w)\Big| = 0, \quad \text{for all } \w \in \overline{ \mathfrak{A}^{D_1} \cap \mathfrak{A}^{D_2} }.
    \end{align*} 
    It follows that, for small $\d$,
    \begin{align}\label{eq:Rouche for dampings}
        \Big|Z^{(D_2)}(\w)\Big| > \Big|Z^{(D_2)}(\w)-Z^{(D_1)}(\w)\Big|, \quad \text{for all } \w\in \partial N_{\r},
    \end{align}
    since, for all $\k>0$, 
    \begin{align*}
        \Big|Z^{(D_2)}(\w)-Z^{(D_1)}(\w)\Big| < \k \quad \text{ and } \quad \Big|Z^{(D_2)}(\w)\Big| \ne 0, \quad  \text{for all } \w\in \partial N_{\r},
    \end{align*}
    where $\partial N_{\r}$ denotes the boundary of $N_{\r}$.
    Thus, we can apply Rouch\'e's Theorem \cite[Chapter 1]{ammari2009layer} and obtain that $Z^{(D_1)}$ and $Z^{(D_1)}$ have the same number of roots in $N_{\r}$.

\end{proof}

\bibliographystyle{abbrv}
\bibliography{references}{}

\end{document}